\documentclass[11pt]{article}%
\usepackage{algpseudocode}
\usepackage{float} 
\usepackage{fullpage}
\usepackage{amsfonts}
\usepackage{amsmath}
\usepackage{amssymb}
\usepackage{amsbsy}
\usepackage{amsthm}
\usepackage{mathtools}
\usepackage{epsfig}
\usepackage{graphicx}
\usepackage{colordvi}
\usepackage{graphics}
\usepackage{color}
\usepackage{verbatim} 
\numberwithin{equation}{section}
\newtheorem{theorem}{Theorem}[section]
\newtheorem{thm}{Theorem}[section]

\newtheorem{lemma}{Lemma}[section]

\newtheorem{prop}[thm]{Proposition}
\newtheorem{alg}[thm]{Algorithm}
\newtheorem{remark}[thm]{Remark}

\newcommand{\norm}[1]{\left\Vert#1\right\Vert}

\newcommand{\pa}{\partial}

\newcommand{\nr}[1]{\ensuremath{\left\|{#1} \right\|}}
\newcommand{\grad}{\nabla}
\newcommand{\inv}[1]{{#1}^{-1}}

\newcommand{\f}{\frac}

\newcommand{\cF}{\mathcal F}

\newcommand{\invR}{R^{-1}}
\newcommand{\eps}{\epsilon}
\DeclareMathOperator{\argmin}{argmin}
\DeclareMathOperator{\cond}{cond}
\DeclareMathOperator{\spa}{span}
\DeclareMathOperator{\divi}{div}
\DeclareMathOperator{\dd}{\,d}


\begin{document}

\title{Filtering for Anderson acceleration}

\author{
Sara Pollock
\thanks{Department of Mathematics,  University of Florida, Gainesville, FL 32611 (s.pollock@ufl.edu)}
\and
Leo G. Rebholz
\thanks{School of Mathematical and Statistical Sciences, Clemson University, Clemson SC 29634 (rebholz@clemson.edu)}
}
\date{}
\maketitle
\begin{abstract}
This work introduces, analyzes and demonstrates an efficient and theoretically sound 
filtering strategy to ensure the condition of the least-squares problem solved at
each iteration of Anderson acceleration. The filtering strategy consists of two steps:
the first controls the length disparity between columns of the least-squares matrix, 
and the second enforces a lower bound on the angles between subspaces spanned by the
columns of that matrix.  The combined strategy is shown to control the condition number
of the least-squares matrix at each iteration. The method is shown to be effective on a 
range of problems based on discretizations of partial differential equations. It is shown
particularly effective for problems where the initial iterate may lie far from the
solution, and which progress through distinct preasymptotic and asymptotic phases.
\end{abstract}

\section{Introduction}\label{sec:intro}
Nonlinear problems are ubiquitous throughout physical modeling, mathematics and
data sciences.  Many basic iterative methods for solving such problems can be written
in the form of a fixed-point iteration: $x_{k+1}= g(x_k)$, where a sequence of iterates
$x_k$ form approximations to the fixed-point solution $x$.
Anderson acceleration (AA) has become an increasingly popular method
for decreasing the number of fixed-point iterations to convergence, and in many cases
enabling convergence where the original fixed-point iteration fails.  
It involves a correction  to each fixed-point update formed from a linear 
combination of a history of iterates and update steps, where the linear combination
is chosen so that the corrected update step has least length.
Its popularity is due in large part to how effective it is for problems over a wide 
range of fields, including, to name a few, 
quantum chemistry, physics, multiphysics and flow phenomena 
\cite{AJW17,EPRX20,FaSa09,LWWY12,PRX19,TKSHCP15}, and recently 
data science and optimization \cite{PDZGQL18,SSWHWH19,WHS21}.

AA was introduced in 1965 by D. Anderson in \cite{Anderson65} in the context of
integral equations, analyzed as a
generalized multi-secant or quasi-Newton algorithm in \cite{E96,FaSa09}, and 
discussed within a Krylov space framework and popularized by its effective and efficient 
application across a variety of problems in \cite{WaNi11}.
Local convergence of the iteration was first shown in \cite{ToKe15}, and the acceleration
property of the method was first theoretically developed in \cite{EPRX20,PR21,PRX19}.
The method is well known to be sensitive to implementation and parameter choices,
including those for the relaxation factor at each iteration, 
the (maximal) algorithmic depth, 
as well as whether and how often the method should be restarted.
Here we introduce a stabilization through selectively eliminating steps from the
history in order to improve convergence and control the condition of the inner 
optimization.

Herein we seek a fixed-point $x$ of $g(x)$ by a fixed-point iteration 
$x_{k+1} = g(x_k)$. Define the 
difference between iterates $e_k$ and the residual $w_k$ by
\begin{align*}
e_{k}  \coloneqq x_k - x_{k-1}, \quad
w_k  \coloneqq g(x_{k-1}) - x_{k-1}.
\end{align*}
Then AA can be written as follows, where parameter $m$ is the maximum allowable
algorithmic depth, $m_k$ is the algorithmic depth at iteration $k$, and $\beta_k$
is the 
relaxation (also called damping or mixing) parameter used on iteration $k$. 

\begin{alg}[Anderson acceleration]\label{alg:aa}
The algorithm starts with a fixed point update step ($k=0$),
then the acceleration starts at $k=1$.
\begin{algorithmic}
\State Choose initial iterate $x_0$, and algorithmic depth parameter $m$
\State Compute $w_1$, 
set $\beta_0$, and update $x_1 = x_0 + \beta_0 w_1$
\Comment{$k=0$}
\end{algorithmic}
\begin{algorithmic}[1]
\For{$k = 1, \ldots$}
\Comment{$k > 0$}
\State Compute $w_{k+1}$
\State Set $m_k = \min\{k,m\}$
\State 
Set $F_k= \begin{pmatrix}(w_{k+1}-w_k) & \hdots & 
                         (w_{k+1-(m_k-1)} - w_{k-(m_k-1)})\end{pmatrix}$\\
\hspace{.4in} and  $E_k= \begin{pmatrix}(e_{k}) & \ldots &
                         (e_{k-(m_k-1)})\end{pmatrix}$
\State
Find $\gamma_k = \argmin  \|F_k \gamma - w_{k+1}\|$
\State Set relaxation parameter $\beta_k$
\State
Update $x_{k+1} = x_k + \beta_k w_{k+1} - \left(E_k + \beta_k F_k \right)\gamma_{k+1}$
\EndFor
\end{algorithmic}
\end{alg}
Define the optimization gain $\theta_{k+1}$ by
\begin{align}\label{eqn:gaindef}
\theta_{k+1} \coloneqq \f{\norm{F_k \gamma_k - w_{k+1}}}{\norm{w_{k+1}}}.
\end{align}
As shown in \cite{EPRX20,PR21,PRX19}, at each iteration $k$, the first order term
in the residual is improved (in comparison to a fixed-point iteration with the same
damping factor $\beta_k$) by a factor of $\theta_k$, but at the cost of additional
higher-order terms.

It is typical but not required to interpret the minimization problem in line 5
of algorithm \ref{alg:aa} 
as least-squares in either the $l_2$ or a weighted $l_2$ norm 
\cite{EPRX20,FaSa09,PR21,WaNi11,YTA22}.  
However, other norms such as $l_1$ or $l_\infty$ can be used \cite{ToKe15}. Here
we will restrict our attention to interpeting the minimization as a (weighted)
least-squares problem, which can be efficiently solved using for instance a QR 
factorization as in \cite{WaNi11}. This finite dimensional Hilbert space setting is 
used to further understand the theoretical convergence properties of AA, and to 
improve performance through dynamic parameter selection in \cite{PR21}. 

The main issue addressed herein is controlling the condition of the matrix $F_k$
used in the least-squares problem in line 5, 
in order to improve the numerical stability of the algorithm.
To this end we introduce a column filtering
strategy to efficiently control the condition of $F_k$.  
In \cite{FaSa09}, both truncated singular value decomposition (TSVD) and Householder
QR with column pivoting (rank revealing QR) are discussed as methods to address poorly 
conditioned least-squares problems that may arise in AA.  
Both of these standard methods for rank deficient and ill-conditioned least-squares 
problems may ultimately interfere with the convergence of AA as the columns of the 
iteration-$k$ least-squares matrix $F_k$
have a natural ordering based on the algorithmic age of the information they represent.
The rank revealing QR approach can change the order of the columns, keeping older 
information which may pollute the solution even while improving conditioning; and, the 
TSVD approach may reweight the columns without discarding older information to 
similar effect.
Another approach to this problem is to add a Tikhonov-type regularization term to the 
least-squares problem. This approach is used for instance in \cite{SdAB16} in the 
context of unconstrained optimization and in \cite{SSWHWH19}, 
in the context of deep reinforcement learning.
While this regularization can be used to control the size of the optimization 
coefficients, it can potentially lead to the selection of less
effective coefficients.

We will use use the more stable TSVD approach 
as a standard for comparison in our numerical tests. 
We will demonstrate in section \ref{sec:numerics} 
that our presently proposed filtering algorithm which features a low overall additional 
computational complexity compares well in terms of the number
of iterations to convergence while controlling the condition numbers; often converging
in substantially fewer iterations or converging where the TSVD approach fails.
The filtering approach further appears more robust with respect to parameter 
selection. 

The remainder of the manuscript is organized as follows.
In subsection \ref{sec:bgt} we review the background theory that supports each of 
the two parts of the filtering method: angle filtering and length filtering.
In section \ref{sec:filter} we introduce the filtering algorithms, which efficiently
control the condition of $F_k$ at each iteration $k$.
In section \ref{sec:ftheory} we analyze the strategy and establish that the combination
of the two filtering routines, one controlling small angles between columns and the
other controlling relative magnitudes of columns, is sufficient to control the 
condition.
In section \ref{sec:numerics} the filtering strategy is tested against the 
standard TSVD on a number of problems with varying complexity.

In the remainder, let $\nr{A}_F$ denote the Frobenius norm of $A$,
the matrix with columns $a_1, \ldots, a_m$.
Vector norms without a subscript, {\em e.g.,} $\nr{a_i}$, 
will denote the vector 2-norm.

\subsection{Background theory}\label{sec:bgt}
The proposed filtering method can be thought of as combining two approaches, each
of which partially controls the condition of $F_k$, the matrix that 
arises in the least-squares problem at each iteration of AA. 
The first is the angle filtering approach of 
\cite{PR21} which controls the angle between each column of $F_k$ and the subspace
spanned by the columns to its left (which contain more recent information). 
Specifically, define $\cF_j$ as the subspace spanned by the first $j$ columns of a
given matrix $F_k$ and define the direction sine $\sigma_i$ by 
\begin{align}\label{eqn:defsigma}
\sigma_i = \sin(\cF_{i-1}, f_i), ~i = 2, \ldots, m_k, ~\text{ and } 
\sigma_{k,min} = \min_{i = 2, \ldots, m_k} \sigma_i,
\end{align}
where $f_i$ is column $i$ of matrix $F_k$.  Given a QR factorization $F_k = QR$, 
we have $\sigma_i = |r_{ii}|/\norm{f_i}$.  This quantity can then be monitored and
used to filter out columns of $F_k$ at each iteration $k$.
The second approach is that of \cite{AJW17,WaNi11} which 
sequentially drops the oldest columns of $F_k$, updating the QR decomposition and 
checking the condition number until the condition is within a given bound.  
Neither of these strategies alone is sufficient to efficiently control the condition
of $F_k$ as the condition can become high due to near linear dependence of the columns
either through disparity in lengths or alignment in angle.
This work combines elements from both strategies to give a guaranteed bound on the 
condition number after at most a single update of the QR decomposition, while
maintaining the angle condition which is necessary for the convergence analysis
of \cite{PR21} for smooth problems. As we show numerically in section 
\ref{sec:numerics}, filtering is beneficial to convergence in both the preasymptotic
and asymptotic regimes.

As in \cite{AJW17,WaNi11}, one part of this filtering method  drops 
older information from the system, which reduces the build-up of 
higher-order (nonlinear) residual terms that can be seen in the analysis of \cite{PR21}.
Assuming the fixed-point operator is sufficiently smooth, with Lipschitz constant
$\kappa_g$ and Lipschitz constant of its derivative $\hat \kappa_g$,
Theorem 5.1 of \cite{PR21} proves that the step $k$ residual $w_{k+1}$ satisfies
\begin{align}\label{eqn:tgenm}
\nr{w_{k+1}} & \le \nr{w_k} \Bigg\{
 \theta_k ((1-\beta_{k}) + \kappa_g \beta_{k})
+ C_k \hat \kappa_g  \sqrt{1-\theta_k^2} 
   \sum_{n = k-{m_{k-1}}}^{k} 
\nr{w_n} 
  \Bigg\},
\end{align}
where at each iteration $k$, the quantity $C_k$ depends on the previous $m_k$ values
of $\theta_j$ and $\beta_j$ (introduced in algorithm \ref{alg:aa} and 
\eqref{eqn:gaindef}), and increases as each of the previous $m_k$ values of 
$\sigma_{j,min}$ from \eqref{eqn:defsigma} decreases. 
From this estimate, we observe that in a converging iteration the largest of the 
higher order terms will be from older information, i.e. 
$\| w_k \| \| w_{k-m_{k-1}} \|$, and can be
(possibly significantly) larger than $\| w_k \|^2$.  
Hence strategies to keep $C_k$ small should also weigh the effect that older 
information may have on the residual.

The length filtering algorithm introduced in subsection \ref{subsec:lfiltc}
is based on columnwise bounds for the condition number of a matrix $F$, assuming a lower
bound on its minimal direction sine between columns as defined by \eqref{eqn:defsigma}.
Computing the Frobenius norm of $F$ columnwise
is both straightforward and sharp. Our bounds for $\nr{F^{-1}}_F = \nr{R^{-1}}_F$ for
$F = QR$ are based on the two following results for general rectangular matrices.

The first is a componentwise bound on the entries of $\invR$ from \cite{PR21}.
\begin{lemma}[\cite{PR21}, Lemma 5.1]\label{lem:Rinv}
Let $F = QR$ be an economy QR decomposition of $n \times m$ matrix $F$, with
$n \ge m\ge 2$.
Let $\cF_p = \spa\{f_1, \ldots, f_p\}$, the subspace spanned by the 
first $p$ columns of $F$.
Suppose  there is a constant $0 < c_s \le 1$ such that $\sin(f_i,\cF_{i-1}) \ge c_s$,
by which there is another constant $c_t = \sqrt{1 - c_s^2}$ for which
$\cos(f_i,f_k)\le c_t$, for $k = 1, \ldots, i$.
Denote $R^{-1} = (s_{ij})$.
Then it holds that
\begin{align}\label{eqn:invR1}
s_{11} &= \f {1}{\nr{f_1}},
\quad &&
|s_{1j}| \le \f{c_t(c_t+c_s)^{j-2}}{\nr{f_1}c_s^{j-1}}, ~2 \le j \le m,
~\text{and}
\\ \label{eqn:invRi}
s_{ii} &\le \f{1}{\nr{f_i}c_s}, ~2\le i \le m,
&&
|s_{ij}| \le \f{c_t(c_t+c_s)^{j-i-1}}{\nr{f_i}c_s^{j-i+1}},
~i+1 \le j \le m.
\end{align}
\end{lemma}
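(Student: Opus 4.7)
The plan is to translate the geometric hypotheses about $\sin(f_i,\cF_{i-1})$ and $\cos(f_i,f_k)$ into entrywise bounds on $R$, and then propagate them to $R^{-1}$ via the standard back-substitution identity for inverting an upper triangular matrix.

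First I would record the bounds on $R$. Since the $i$th step of Gram--Schmidt yields $|r_{ii}| = \nr{f_i}\sin(f_i,\cF_{i-1}) \ge c_s \nr{f_i}$ for $i\ge 2$, while $r_{11}=\nr{f_1}$. For the off-diagonal entries, the column-norm identity $\nr{f_j}^2 = \sum_{i=1}^{j} |r_{ij}|^2$ combined with $|r_{jj}|^2 \ge c_s^2 \nr{f_j}^2$ gives
\begin{equation*}
\sum_{i=1}^{j-1} |r_{ij}|^2 \;\le\; (1-c_s^2)\nr{f_j}^2 \;=\; c_t^2 \nr{f_j}^2,
\end{equation*}
so in particular $|r_{ij}|\le c_t \nr{f_j}$ for $1\le i<j$.

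Next, writing $R R^{-1}=I$ and using upper-triangularity yields the back-substitution recursion $s_{ii}=1/r_{ii}$ and, for $j>i$,
\begin{equation*}
s_{ij} \;=\; -\frac{1}{r_{jj}}\sum_{k=i}^{j-1} s_{ik}\, r_{kj}.
\end{equation*}
Substituting the bounds from the first step gives the key inequality
\begin{equation*}
|s_{ij}| \;\le\; \frac{c_t}{c_s}\sum_{k=i}^{j-1}|s_{ik}|,
\end{equation*}
where the $\nr{f_j}$ factors in $|r_{kj}|$ and $r_{jj}$ cancel — this cancellation is the reason the stated bounds depend on $\nr{f_i}$ but not on $\nr{f_j}$.

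Now fix $i$ and set $v_k=\sum_{l=i}^{k}|s_{il}|$. The recursion rewrites as $v_k \le v_{k-1}(c_s+c_t)/c_s$, so by induction on $k-i$,
\begin{equation*}
v_k \;\le\; v_i \Bigl(\frac{c_s+c_t}{c_s}\Bigr)^{k-i}, \qquad |s_{ij}| \;\le\; \frac{c_t}{c_s}\,v_{j-1}.
\end{equation*}
For $i\ge 2$ we have $v_i=|s_{ii}|\le 1/(\nr{f_i}c_s)$, which yields the bound \eqref{eqn:invRi}. For $i=1$ the exact value $v_1=1/\nr{f_1}$ produces the slightly sharper estimate \eqref{eqn:invR1} (one fewer power of $c_s$ in the denominator). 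The diagonal estimates $s_{ii}=1/r_{ii}$ follow directly from triangularity.

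The main obstacle is essentially bookkeeping: the two cases $i=1$ and $i\ge 2$ must be handled separately because the base term $v_i$ differs by a factor of $c_s$, and one must be careful about the off-by-one in the exponent of $(c_s+c_t)$. The partial-sum substitution $v_k$ linearizes the otherwise awkward nested recursion and is the only non-routine idea; everything else is a direct consequence of the QR identities together with the hypothesis $\sin(f_i,\cF_{i-1})\ge c_s$.
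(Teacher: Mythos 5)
Your proof is correct and follows essentially the same route as the proof of Lemma 5.1 in \cite{PR21}, which the paper only cites rather than reproduces: entrywise bounds $|r_{jj}|\ge c_s\nr{f_j}$ and $|r_{ij}|\le c_t\nr{f_j}$ on the triangular factor, followed by back-substitution and an induction on the partial column sums of $R^{-1}$, which is exactly what produces the geometric factor $(c_s+c_t)^{j-i-1}/c_s^{j-i+1}$. Two cosmetic remarks only: the displayed recursion for $s_{ij}$ is the one coming from $R^{-1}R=I$ rather than $RR^{-1}=I$ as you wrote (both identities are of course valid), and your Pythagorean derivation of $|r_{ij}|\le c_t\nr{f_j}$ shows the hypothesis $\cos(f_i,f_k)\le c_t$ is never actually needed --- the sine condition alone suffices.
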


Next, summing by column over the square of the bounds from lemma \ref{lem:Rinv}
gives the following bounds on the squared $l_2$ norm of each column of $\inv R$.
\begin{prop}\label{prop:colbd}
Suppose the hypotheses of lemma \ref{lem:Rinv}.
Let $s_i$ denote column $i$ of $\inv R$.  The the following bounds hold:
\begin{align}\label{eqn:colbd1}
\nr{s_1}^2 & = \f{1}{\nr{f_1}^2} \eqqcolon b_1,
\\ \label{eqn:colbd2}
\nr{s_2}^2 &\le \f{1}{c_s^2}\left\{\f{c_t^2}{\nr{f_1}^2} + \f{1}{\nr{f_2}^2} \right\}
\eqqcolon b_2,
\\ \label{eqn:colbdj}
\nr{s_j}^2 &\le \f{1}{c_s^2}\left( \f{c_t^2(c_t + c_s)^{2(j-2)}}{\nr{f_1}^2 c_s^{2(j-2)}} 
+ \sum_{i = 2}^{j-1}\f{c_t^2(c_t + c_s)^{2(j-i-1)}}{\nr{f_i}^2c_s^{2(j-i)}}
+ \f{1}{\nr{f_j}^2} \right\} \eqqcolon b_j , \ \ 3\le j \le  m.
\end{align}
\end{prop}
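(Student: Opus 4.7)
The proof is essentially a direct bookkeeping exercise using Lemma \ref{lem:Rinv}. The plan is to observe that since $R$ is upper triangular, so is $R^{-1}$, whence column $j$ of $R^{-1}$ has nonzero entries only in rows $1,\ldots,j$. Thus $\nr{s_j}^2 = \sum_{i=1}^{j} |s_{ij}|^2$, and I can bound each $|s_{ij}|^2$ directly by squaring the componentwise estimates in \eqref{eqn:invR1}--\eqref{eqn:invRi}.

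First I would handle the base cases. For $j=1$, only $s_{11}$ is nonzero and equality gives \eqref{eqn:colbd1}. For $j=2$, the column has two nonzero entries $s_{12}$ and $s_{22}$; applying \eqref{eqn:invR1} with $j=2$ yields $|s_{12}|^2 \le c_t^2/(\nr{f_1}^2 c_s^2)$ and \eqref{eqn:invRi} gives $|s_{22}|^2 \le 1/(\nr{f_2}^2 c_s^2)$. Pulling $1/c_s^2$ out in front produces exactly \eqref{eqn:colbd2}.

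For the general case $3 \le j \le m$, I would split the sum as
\[
\nr{s_j}^2 = |s_{1j}|^2 + \sum_{i=2}^{j-1} |s_{ij}|^2 + |s_{jj}|^2,
\]
apply \eqref{eqn:invR1} to the first term, the off-diagonal part of \eqref{eqn:invRi} to the middle sum, and the diagonal part of \eqref{eqn:invRi} to the last term. Squaring each bound and then factoring $1/c_s^2$ out of every summand (using $c_s^{2(j-1)} = c_s^2 \cdot c_s^{2(j-2)}$, $c_s^{2(j-i+1)} = c_s^2 \cdot c_s^{2(j-i)}$, and $c_s^2 = c_s^2 \cdot 1$ in the three pieces respectively) yields the right-hand side of \eqref{eqn:colbdj} term by term.

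No step presents real difficulty; the only place to be careful is the index bookkeeping in the exponents of $c_s$ and $(c_t+c_s)$ when squaring and redistributing the factor $1/c_s^2$, so that the three pieces collapse into the single parenthesized expression in \eqref{eqn:colbdj}. The result then defines $b_j$ as an upper bound for $\nr{s_j}^2$ depending only on $c_s$, $c_t$, and the column norms $\nr{f_1},\ldots,\nr{f_j}$, which is exactly the form needed by the length filter in algorithm \ref{alg:lfilt} and by the condition-number bound in the subsequent Theorem \ref{thm:condbd}.
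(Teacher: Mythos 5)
Your proposal is correct and matches the paper's argument: the paper gives no separate proof beyond the remark that ``summing over the entries by column'' of the componentwise bounds in Lemma \ref{lem:Rinv} yields the stated column bounds, which is precisely the squaring-and-summing bookkeeping you carry out. The index arithmetic in your redistribution of the factor $1/c_s^2$ checks out in all three cases.
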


In the length filtering algorithm, the sums of the squared norms of the
first $l$ columns of $f$, multiplied by the sum over $b_j, ~j = 1, \ldots, l$, as defined
in \eqref{eqn:colbd1}-\eqref{eqn:colbdj}, are used to bound the square of the 
Frobenius condition number of $F$, truncated to its first $l$ columns.

In summary, the (angle) filtering algorithm introduced in \cite{PR21} removes columns 
of $F_k$ which are close in angle to the subspace spanned by more recent columns.  
This controls the 
constant $C_k$ that multiplies
higher order residual terms of \eqref{eqn:tgenm} as shown in
\cite[Theorem 5.1]{PR21}. 
However, alone this method is not sufficient to control the condition 
of the least-squares matrix $F_k$, since poor condition can also 
arise
from large magnitude differences between the lengths
of the columns of $F_k$. 
The full condition filtering algorithm presented here 
consists of the length filtering algorithm \ref{alg:lfilt} followed by the
angle filtering algorithm \ref{alg:afilt}. 

\section{Filtered AA}\label{sec:filter}
The filtered Anderson acceleration (FAA) algorithm presented here consists of two 
filtering steps, the first filtering for disparity in column lengths, 
and the second filtering for small angles between each column and the subspace spanned 
by the columns to its left.  The filtered acceleration algorithm is presented in
algorithm \ref{alg:faa}. The individual filtering routines are described in algorithms
\ref{alg:afilt} and the new algorithm \ref{alg:lfilt}.  
The algorithm removes columns from $F_k$ at each iteration $k > 1$
to ensure $\cond_F( F_k) \le \bar \kappa$, where $\bar \kappa$ is a user chosen upper bound on the allowable condition number. 

The FAA algorithm \ref{alg:faa} requires two additional parameters in comparison to the 
standard AA algorithm \ref{alg:aa}. The first is $\bar \kappa$, a user-determined
upper-bound on the condition of each matrix $F_k$ used in the least-squares problem.
The second is $c_s$, a lower bound on the allowable direction sines between each 
column of each matrix $F_k$ and the subspace spanned by columns with more recent
information, as defined in \eqref{eqn:defsigma}. A discussion on parameter
selection is included in subsection \ref{subsec:ang}.

\begin{alg}[Filtered Anderson acceleration)]\label{alg:faa}
The algorithm starts with a fixed point update step ($k=0$), an AA(1) step ($k=1$), 
then the filtering starts at $k=2$.
\begin{algorithmic}
\State Choose initial iterate $x_0$, and parameters $c_s, ~\bar \kappa$ and $m$
\State Compute $w_1$, set $\beta_0$ and update $x_1 = x_0 + \beta_0 w_1$ 
\Comment{$k=0$}
\State Compute $w_2$ 
\Comment{$k=1$}
\State 
Set $F_1= \begin{pmatrix} w_{2}-w_1 \end{pmatrix}$
and 
$E_1= \begin{pmatrix} x_{1}-x_{0}\end{pmatrix}$
\State Compute $\gamma_2$ by $F_1^\ast F_1 \gamma_2 = F_1^\ast w_2$
\State
Set $m_1 = 1$, set $\beta_1$ and 
update $x_{2} = x_1 + \beta_1 w_{2} - \left(E_1 + \beta_1 F_1 \right)\gamma_{2}$
\end{algorithmic}
\begin{algorithmic}[1]
\For{$k = 2, \ldots$}
\Comment{$k > 1$}
\State Compute $w_{k+1}$
\State Update $m_k = \min\{m_{k-1}+1,m\}$, and drop the last columns of 
$E_{k-1}$,$F_{k-1}$ if $m_{k-1}=m$ 
\State 
Set $F_k= \begin{pmatrix}(w_{k+1}-w_k) & (F_{k-1})\end{pmatrix}$
and  $E_k= \begin{pmatrix}(x_{k}-x_{k-1}) & (E_{k-1})\end{pmatrix}$
\State $[E_k,F_k,m_k] =$ Length Filter ($E_k,F_k,m_k,c_s, \bar \kappa$)
\State $[E_k,F_k,Q_k,R_k,m_k] =$ Angle Filter ($E_k,F_k,m_k,c_s$)
\State Compute $\gamma_{k+1}$
by $R_k \gamma_{k+1}= Q_k^\ast w_{k+1}$
\State
Set $\beta_k$
\State
Update $x_{k+1} = x_k + \beta_k w_{k+1} - \left(E_k + \beta_k F_k \right)\gamma_{k+1}$
\EndFor
\end{algorithmic}
\end{alg}

\begin{remark}[TSVD]\label{rem:tsvd}
We compare our numerical results with algorithm \ref{alg:faa} against the standard
method of controlling the condition number by TSVD, as suggested in \cite{FaSa09}. 
Following \cite{chan82} we implement the TSVD by computing the economy QR 
factorization $F_k = Q R$, followed by the SVD of $R = U_R \Sigma_R V_R^\ast$,
determine the largest $s$ such that the ratio of singular values 
$\sigma_1 / \sigma_s < \bar \kappa$, then restrict $U_R$ and $V_R$ to their first 
$s$ columns and $\sigma_R$ to its first $s$ rows and columns, referred to respectively
as $U_s, V_s$ and $\Sigma_s$.
The solution of the least-squares problem 
$\gamma_{k+1} = \argmin \nr{F_k \gamma - w_{k+1}}$ is then given by 
$\gamma_{k+1} = V_s \Sigma_s^{-1} U_s^\ast Q^\ast w_{k+1}$.
This procedure replaces lines 5-7 of algorithm \ref{alg:faa}.
\end{remark}

\begin{remark}[On the order of length and angle filtering]\label{rem:order}
Algorithm \ref{alg:faa} does length filtering before angle filtering to potentially
reduce the size of the QR factorization. However the order of the filtering steps 
could be reversed provided an additional step to truncate the factors of the QR 
decomposition is performed after the length filtering. 
Each order has its own advantage, as discussed in remark \ref{rem:order2}, 
which specifies how Theorem \ref{thm:condbd} still holds in the 
case of angle filtering performed first.
\end{remark}

\subsection{Angle filtering}\label{subsec:ang}
The angle filtering algorithm was proposed by the authors in \cite{PR21} in order to
determine a one-step residual bound for smooth problems, 
free from an assumed $l_\infty$ bound on the optimization coefficients.  

\begin{alg}[$E,F,Q,R,m$ = Angle filtering($E,F,m,c_s$)] \label{alg:afilt}
Filter out columns of $F$ by direction sine.
\begin{algorithmic}[1]
\State Compute the economy QR decomposition $F = Q R$
\State Compute $\sigma_i = |r_{ii}|/\|f_i\|, ~ i = 2,\ldots, m$, 
where $f_i$ is column $i$ of $F$, and
$r_{ii}$ is the corresponding diagonal entry of $R$
\State Remove any columns $i$ of $E$ and $F$ for which $\sigma_i < c_s$
\State Update $m$ with the new number of columns of $F$
\If{any columns were removed}
\State Recompute $F = QR$
\EndIf
\end{algorithmic}
\end{alg}

Algorithm \ref{alg:faa} has two additional parameters to choose, in comparison to 
standard AA. One is $\bar \kappa$, the maximum allowable condition number.
The second is $c_s$, the minimum allowable direction sine between columns of $F_k$.
The condition estimate in the length-filtering algorithm is less sharp for smaller
values of $c_s$.  However, if $c_s$ is chosen too large, the angle filtering algorithm
\ref{alg:afilt} can reduce the algorithm to standard AA with $m=1$ (see
subsection \ref{subsec:quasi}, below).  
We generally found a range for $c_s$ between $0.1$ and $1/\sqrt 2$ to work well. 
In problems where the solution progresses through distinct preasymptotic and asymptotic 
regimes, as shown below in subsection \ref{subsec:plap}, it can also be advantageous to 
start with a higher values of $c_s$ and dynamically decrease it to a lower value.
Heuristically this makes sense because a
more stringent angle filtering condition can be advantageous in the preasymptotic regime,
as it prevents the build-up of higher-order residual terms. However in the asymptotic 
regime, a smaller value of $c_s$ may be preferred because the more recent information
is kept, and older columns in $F_k$ with orders of magnitude difference in length 
from the first are discarded. Our numerical examples show that the filtering 
algorithm \ref{alg:faa} also decreases the sensitivity of AA to the choice of maximum 
algorithmic depth $m$, as $m$ can be chosen quite large, and extra columns will simply
be (length) filtered out.

\subsection{Length filtering algorithm}\label{subsec:lfiltc}

Here we present a novel length filtering strategy. It is 
motivated by the bounds on columns of $R^{-1}$ from proposition \ref{prop:colbd}.
In this algorithm, the Frobenius norm of the
first $k$ columns of $F$ multiplied by the sum over the bounds of the squared norms of
each column of $R^{-1}$, denoted $b_j, ~j = 1, \ldots, k$, as defined
in \eqref{eqn:colbd1}-\eqref{eqn:colbdj}, are used to bound the square of the 
Frobenius condition number of $F$, denoted in the algorithm as $C_F$.  From $m_k$ down
to $1$, the constant $C_F$ determined by the first $k$ columns of $F$ is updated, 
until it is less than a desired (user-chosen) number. On terminating this loop with
some number of $\bar k \ge 1$ columns remaining, the columns $j > \bar k$ are removed from
the matrices $E$ and $F$.

The combination of length and angle filtering is further
justified in the next section by the main theorem \ref{thm:condbd}, which
shows the combination of length filtering algorithm \ref{alg:lfilt} 
and angle filtering algorithm \ref{alg:afilt} controls the Frobenius condition number 
of the least-squares matrix $F_k$ at each iteration $k$ of FAA algorithm \ref{alg:faa}.

\begin{alg}[$E,F, m$ = Length filtering($E,F,m,c_s$)]\label{alg:lfilt} 
Filter out columns of $F$ by length.
\begin{algorithmic}[1]
\State Define $c_t = \sqrt{1-c_s^2}$ 
\State Compute the vector norms $\nr{f_j}^2$ and the bounds $b_j$ for $\nr{s_j}^2$, 
given by \eqref{eqn:colbd1}-\eqref{eqn:colbdj}, for $j = 1, \ldots, m$ 
\For {$k = m$ down to $1$}
  \State Compute $C_F =
\left( \sum_{j = 1}^k \nr{f_j}^2 \right)\left( \sum_{j = 1}^k b_j \right)$
  \State Exit loop if $C_F \le \bar \kappa^2$
\EndFor
\State $m \gets k$, $E \gets E(:,1:m)$, $F \gets F(:,1:m)$
\end{algorithmic}
\end{alg}

\section{Theory}\label{sec:ftheory}
Together, as shown below in theorem \ref{thm:condbd}, the two low-cost 
filtering strategies, 
algorithms \ref{alg:afilt} and \ref{alg:lfilt},
bound the Frobenius condition number of each least-squares matrix that arises in AA.
Let $F = QR$ be an economy QR decomposition of $n \times m$ matrix $F$. 
Then $\nr{R}_F = \nr{F}_F$, and $\nr{R}_2 = \nr{F}_2$.
By the equivalence $\nr{F}_2 \le \nr{F}_F \le \sqrt m \nr {F}_2$ 
\cite[Chapter 2]{GoVL96}, controlling the Frobenius condition number of $F$ 
controls the $2$-norm condition number of $F$ as well.

The main theoretical result is that given a minimum direction sine $0 < c_s \le 1$ and 
a maximum condition number $\bar \kappa$, the combination of algorithms 
\ref{alg:afilt} and \ref{alg:lfilt} ensures the matrix
$F_k$ from algorithm \ref{alg:faa} has Frobenius condition number no greater than
$\bar \kappa$ at each iteration $k$.
We are concerned with the case $n \gg m$, meaning the number of degrees of freedom
is some orders of magnitude larger than the algorithmic depth $m = m_k$. 
In this setting,
an additional computational cost of algorithm \ref{alg:lfilt} is negligible.

\begin{theorem}\label{thm:condbd}
Suppose the FAA algorithm \ref{alg:faa} is run with a given 
$0 < c_s < 1$ and $\bar \kappa > 1$. Then at each iteration $k$, the matrix $F_k$
of the least-squares problem solved in step 7 of the FAA algorithm \ref{alg:faa}
by the QR decomposition has Frobenius hence $2$-norm condition number less than 
$\bar \kappa$. Moreover, both filtering steps are guaranteed to keep the 
first column of $F_k$, preserving the use of the most recent information.
\end{theorem}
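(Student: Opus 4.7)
The plan is to express the Frobenius condition number through the QR factors, use Proposition~\ref{prop:colbd} column-wise to obtain a scalar upper bound $C_F$ in the column lengths and $c_s$, check that the length-filter loop forces the pre-filtering quantity $C_F \le \bar\kappa^2$, and finally close the gap caused by the angle filter running afterwards via a monotonicity argument.

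For the opening step I would use that for an economy QR factorization $F_k = QR$, $\|F_k\|_F = \|R\|_F$ and $\|F_k^+\|_F = \|R^{-1}\|_F$, giving
\[ \cond_F(F_k)^2 \;=\; \Big(\sum_j \|f_j\|^2\Big)\Big(\sum_j \|s_j\|^2\Big). \]
Whenever the columns of $F_k$ satisfy the angle condition $\sin(f_i,\cF_{i-1}) \ge c_s$ for $i \ge 2$, Proposition~\ref{prop:colbd} gives $\|s_j\|^2 \le b_j$, hence $\cond_F(F_k)^2 \le C_F := \big(\sum_j \|f_j\|^2\big)\big(\sum_j b_j\big)$, and the task reduces to controlling $C_F$. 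The backward loop in Algorithm~\ref{alg:lfilt} exits at the largest leading prefix $f_1,\ldots,f_{m'}$ with $C_F \le \bar\kappa^2$, which always succeeds with $m' \ge 1$ since at $k=1$ one has $C_F = \|f_1\|^2 b_1 = 1 \le \bar\kappa^2$ by $\bar\kappa > 1$. The angle filter in Algorithm~\ref{alg:afilt} only inspects $\sigma_i$ for $i \ge 2$, so neither filter can remove column $1$, which proves the \emph{moreover} claim.

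The main obstacle is that the $b_j$ used by the length filter are \emph{only valid} upper bounds under the angle condition, which is enforced by the subsequent angle filter, and that filter may further prune columns. To bridge this, I would prove a monotonicity lemma: for any subset $S = \{1 = j_1 < \cdots < j_p\} \subseteq \{1,\ldots,m'\}$ whose columns satisfy the angle condition, the re-indexed bound $C_F(S)$ (obtained by applying Proposition~\ref{prop:colbd} to the submatrix) satisfies $C_F(S) \le C_F(\{1,\ldots,m'\})$. The sum of $\|f_j\|^2$ is trivially monotone; the work is in the $b$-sums. Setting $r := (c_t+c_s)/c_s \ge 1$, every term of $b_l^S$ carries a factor $r^{2(l-i-1)}$ (or $r^{2(l-2)}$ for the first-column term), while the matching term of $b_{j_l}$ carries $r^{2(j_l - j_i - 1)}$ (respectively $r^{2(j_l-2)}$). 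The key inequality $j_l - l \ge j_i - i$ for $i \le l$ --- immediate because removed indices only accumulate with position --- forces the subset exponents to be no larger, and unmatched terms in $b_{j_l}$ corresponding to removed columns contribute non-negatively. Hence $b_l^S \le b_{j_l}$ term-by-term, giving $C_F(S) \le C_F(\{1,\ldots,m'\}) \le \bar\kappa^2$. Applying this to the post-angle-filter $F_k$, which by construction satisfies the angle condition, yields $\cond_F(F_k) \le \bar\kappa$; the $2$-norm bound then follows from the Frobenius--$2$-norm equivalence recalled at the start of Section~\ref{sec:ftheory}.
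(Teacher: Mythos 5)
Your proposal is correct and follows the same route as the paper's proof: both reduce to the Frobenius norm via $\nr{F}_2\le\nr{F}_F$, write $\cond_F(F)^2=\left(\sum_j\nr{f_j}^2\right)\nr{R^{-1}}_F^2$, bound the latter factor columnwise by Proposition \ref{prop:colbd} to recover exactly the quantity $C_F$ tested in the length-filter loop, and observe that column $1$ survives both filters because the $k=1$ prefix gives $C_F=1\le\bar\kappa^2$ and the angle filter only examines $\sigma_i$ for $i\ge 2$.

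The one place you go beyond the paper is the final step. The paper disposes of the subsequent angle filtering with the remark that removing columns $2,\ldots,\hat m$ ``would only decrease the bounds in proposition \ref{prop:colbd} by removing terms,'' which is slightly loose: deleting an interior column also re-indexes the survivors, so the exponents of $(c_t+c_s)/c_s$ in the surviving terms of $b_j$ change rather than the terms merely disappearing. Your monotonicity lemma --- comparing the re-indexed bound $C_F(S)$ for a subset $S=\{1=j_1<\cdots<j_p\}$ against $C_F(\{1,\ldots,m'\})$ via $j_l-l\ge j_i-i$ and $r=(c_t+c_s)/c_s\ge 1$ --- is precisely what makes that assertion rigorous, and the exponent comparison checks out against \eqref{eqn:colbd1}--\eqref{eqn:colbdj}. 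The only micro-gap in your write-up is the claim that the post-angle-filter matrix satisfies the angle condition ``by construction'': the $\sigma_i$ are measured against the pre-filter subspaces, so you should add the one-line observation that shrinking $\cF_{i-1}$ can only increase $\sin(f_i,\cF_{i-1})$; the paper elides this point as well.
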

\begin{proof}
By the standard inequality $\nr{F}_2 \le \nr{F}_F$ \cite[Chapter 2]{GoVL96}, it is 
sufficient to establish the result for the Frobenius condition number.
First we have 
\begin{align}\label{eqn:bound1}
\cond(F)_F^2
 = \nr{F}_F^2 \nr{F^{-1}}_F^2  
= \nr{F}_F^2 \nr{R^{-1}}_F^2 
= \left( \sum_{i = 1}^m \nr{f_i}^2 \right)\nr{R^{-1}}_F^2,
\end{align}
where $m$ is the number of columns of $F$.

Suppose the length filtering algorithm \ref{alg:lfilt} is done first. Then
since the bounds on column $j$ of $R^{-1}$ in Proposition \ref{prop:colbd} given by 
\eqref{eqn:invR1}-\eqref{eqn:invRi} depend only on 
information with indices $p \le j$,
removing columns $p$ of $F$ for which $p > j$ does not change the column 
sum of index $j$.  Hence the matrix $\hat F$ with $\hat m$ columns returned from length 
filtering algorithm \ref{alg:lfilt} and sent to angle filtering algorithm 
\ref{alg:afilt} would have condition number less that $\bar \kappa$, if it satisfied the 
angle conditions 
$\sin(f_i, \cF_{i-1})$, 
$i = 2, \ldots, \hat m$. 
The first index $i = 1$ is not included in the previous statement as the angle condition 
is satisfied on the first column by default. 
Moreover, algorithm \ref{alg:lfilt} will never remove the first column of $F$ 
so long as $\bar \kappa \ge 1$ as $k=1$ yields $C_F =1$.
The angle filtering algorithm \ref{alg:afilt} may remove any of columns 
$2, \ldots \hat m$ of $\hat F$, which would only decrease the bounds in 
proposition \ref{prop:colbd} by removing terms.  Hence the condition estimate holds.
\end{proof}

\begin{remark}\label{rem:order2}
If the angle filtering algorithm is done first, then theorem \ref{thm:condbd} still holds.
In this case, the hypotheses of lemma 
\ref{lem:Rinv} are satisfied, and by \eqref{eqn:bound1} and proposition
\ref{prop:colbd}, the length filtering algorithm 
\ref{alg:lfilt} produces an output matrix with $\cond(F)_F < \bar \kappa$.
To update the $QR$ after length filtering, the QR factors can simply be truncated to 
agree with the updated $F$. In particular
\[
R = \begin{pmatrix} R_{11} & R_{12} \\ & R_{22} \end{pmatrix} 
~\text{ and }~ Q = \begin{pmatrix} Q_1 & Q_2 \end{pmatrix}
\] 
where $R_{11}$ and $Q_1$ correspond to the first $k$ columns of $F$ which are kept by
length filtering algorithm \ref{alg:lfilt}. Then it suffices to replace $R$ by $R_{11}$ 
and $Q$ by $Q_1$ (or $Q^\ast w_{k+1}$ by the 
corresponding entries of $Q_1^\ast w_{k+1}$, if $Q$ itself is not stored).
\end{remark}

Although the two filtering steps could be run in either order, we run the
length filtering first because it has lower computational complexity and potentially
reduces the number of columns used in the QR factorization(s) of the angle filtering
algorthm \ref{alg:afilt}. If a sharper condition estimate is preferred, the angle
filtering could be run first, after which parameter $c_s$ could be replaced 
by $c_s'$, the smallest realized value of $\sin(f_i,\cF_{i-1}) = |r_{ii}|/\|f_i\|$,
where $i$ indexes over the columns remaining after angle filtering.  Since 
$c_s' \ge c_s$, the result is a sharper estimate of the condition from length filtering.  
In our tests (not shown), this reordering did not make substantial impact  
on the total iterations to convergence.

\section{Numerics}\label{sec:numerics}
In the following numerical tests, the FAA 
algorithm \ref{alg:faa} is tested against Anderson with the TSVD applied 
to the least-squares problem as in remark \ref{rem:tsvd}.
The first three examples show the advantages of filtering in more complex applications.
The last two examples give insight into how filtering works in the preasymptotic
vs. asymptototic regimes.  
In particular, as seen in subsection \ref{subsec:quasi},
the length filtering \ref{alg:lfilt} dominates in the asymptotic regime, where the 
condition of the least-squares matrix can be compromised by the disparity in the column 
lengths as the algorithm converges.  The angle filtering \ref{alg:afilt}, on the other
hand,  stabilizes the algorithm throughout the preasymptotic regime, and may be an 
important tool in convergence from poor initial iterates.  
\subsection{Nonlinear Helmholtz equation}

We consider first the approximation of solutions to the nonlinear Helmholtz (NLH) equation from optics, where the
interest is the propagation of continuous-wave laser beams through transparent dielectrics.  The NLH system in 1D is written as: 
Find $u:[0,10]\rightarrow \mathbb{C}$ satisfying
\begin{align*}
u_{xx} + k_0^2 \left( 1 + \epsilon(x) |u|^2 \right) u  = 0, \ \ \ 
0<x<10, 
\end{align*}
where $u=u(x)$ represents the unknown (complex) scalar electric field,  
$k_0$ denotes the linear wavenumber, and $\epsilon(x)$ is a material dependent quantity
involving the linear index of refraction and the Kerr coefficient.
The physically consistent two-way boundary condition is derived in \cite{FT01,BFT09}, and is given by
\begin{align*}
u_x + i k_0 u &=2i k_0 \mbox{ at } x=0,\\
u_x - i k_0 u & =0 \mbox{ at } x=10.    
\end{align*}
Despite being 1D, this system can be quite challenging for nonlinear solvers due to its cubic nonlinearity \cite{BFT07,BFT09}.

\begin{figure}[ht]
\center
\includegraphics[width = .75\textwidth, height=.28\textwidth,viewport=0 0 1200 400, clip]{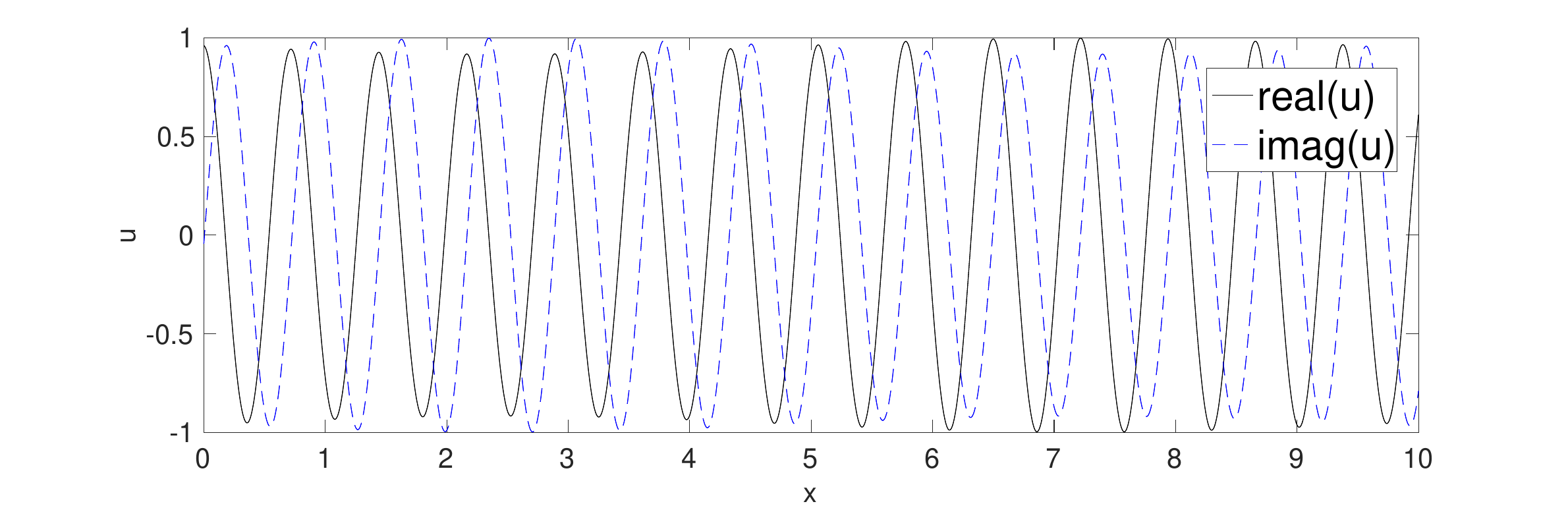}
\caption{\label{solnplotNLH} The plot above shows the solution of the nonlinear Helmholtz problem with $k_0=8$ and $\epsilon=0.2$.}
\end{figure}

A Picard-type iteration for this NLH system takes the form
\begin{align*}
{u_{j+1}}_{xx} + k_0^2 u_{j+1} + k_0^2 \epsilon(x) |  u_j |^2  u_{j+1} & = 0, \ \ \ \ \ \  0<x<10, \\
{u_{j+1}}_x + i k_0 u_{j+1} & = 2i k_0, \ \ \ x=0, \\
{u_{j+1}}_x - i k_0 u_{j+1} & = 0, \ \ \ \ \ \ \ x=10. 
\end{align*}
This iteration is advantageous due to its simplicity, and can work directly with complex numbers.  The Newton iteration, on the other hand, seems
to require decomposition into the real and imaginary parts, leading to fully coupled block linear systems at each iteration.  Here, the system is discretized
in space with a second order finite difference approximation using $N$=2001 equally spaced points, and for an initial guess we use the nodal interpolant 
of $(\cos (k_0 x) + i\sin (k_0 x))$, which is the linear ($\epsilon=0$) Helmholtz equation solution for the same $k_0$.

For our tests, we consider parameters $k_0=8$ and constant $\epsilon=0.2$, and a plot of the solution is shown in figure \ref{solnplotNLH}.  
Our computations use no relaxation ($\beta=1$) and depth $m=20$, and we compare both the condition number and the convergence of usual AA to FAA ($\bar{\kappa}=10^8$ and varying $c_s$=0.1, 0.2) to AA with TSVD (varying tolerance $10^3$ and $10^8$).  From figure \ref{convplotsNLH} at left we observe the convergence of each method, and it is clear that TSVD and FAA both have much better convergence than usual AA.  We also observe that filtering with $c_s=0.1$ provides the fastest convergence, and TSVD with both $10^3$ and $10^8$ tolerances and FAA with $c_s=0.2$ all have similar convergence behavior.  Somewhat larger and smaller $c_s$ (0.05, 0.3, 0.4) were also tested, with results similar to that of $c_s=0.2$.  We observe from figure \ref{convplotsNLH} at right that while the optimization problem of usual AA  has a high condition number near the end of the iteration, the other methods behave as designed: TSVD holds the condition number below its tolerance, and FAA holds the condition number lower than TSVD with tolerance $10^3$.

\begin{figure}[ht]
\center
\includegraphics[width = .48\textwidth, height=.35\textwidth,viewport=0 0 550 405, clip]{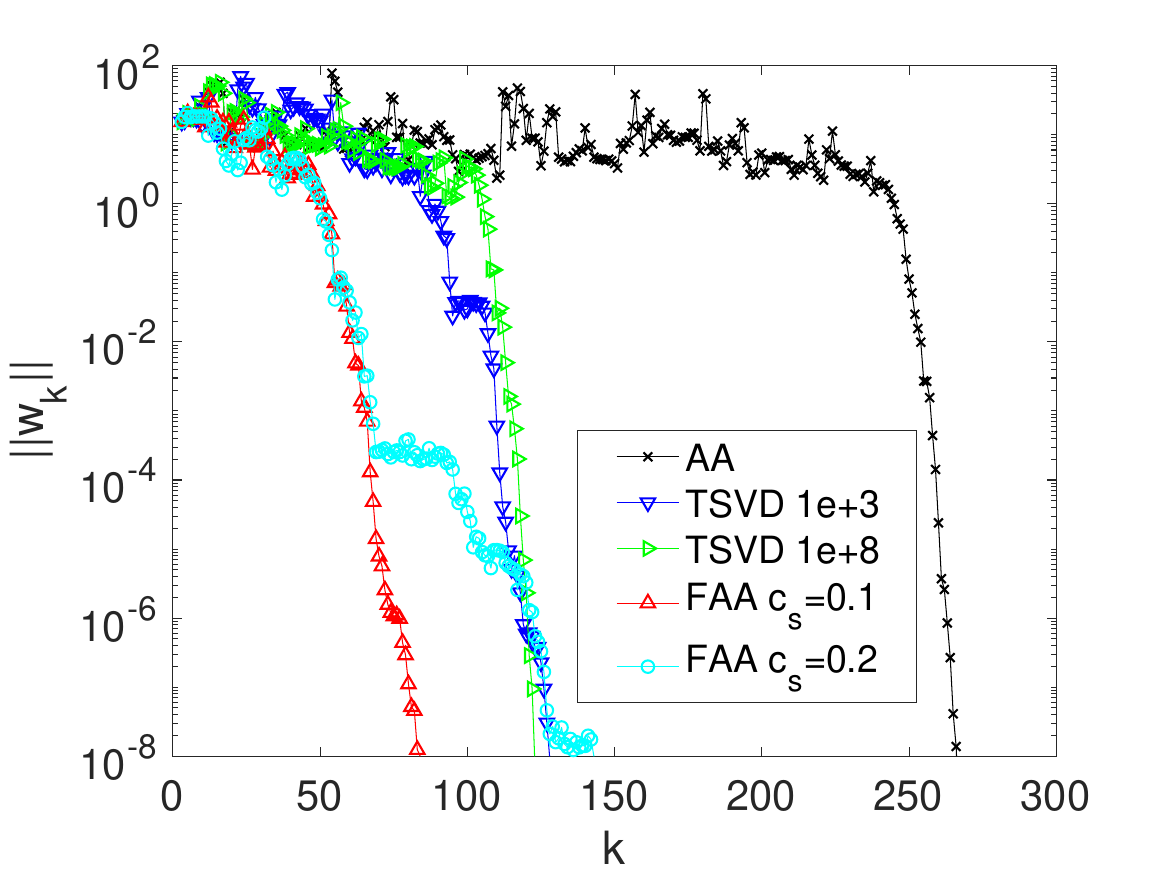}
\includegraphics[width = .48\textwidth, height=.35\textwidth,viewport=0 0 550 405, clip]{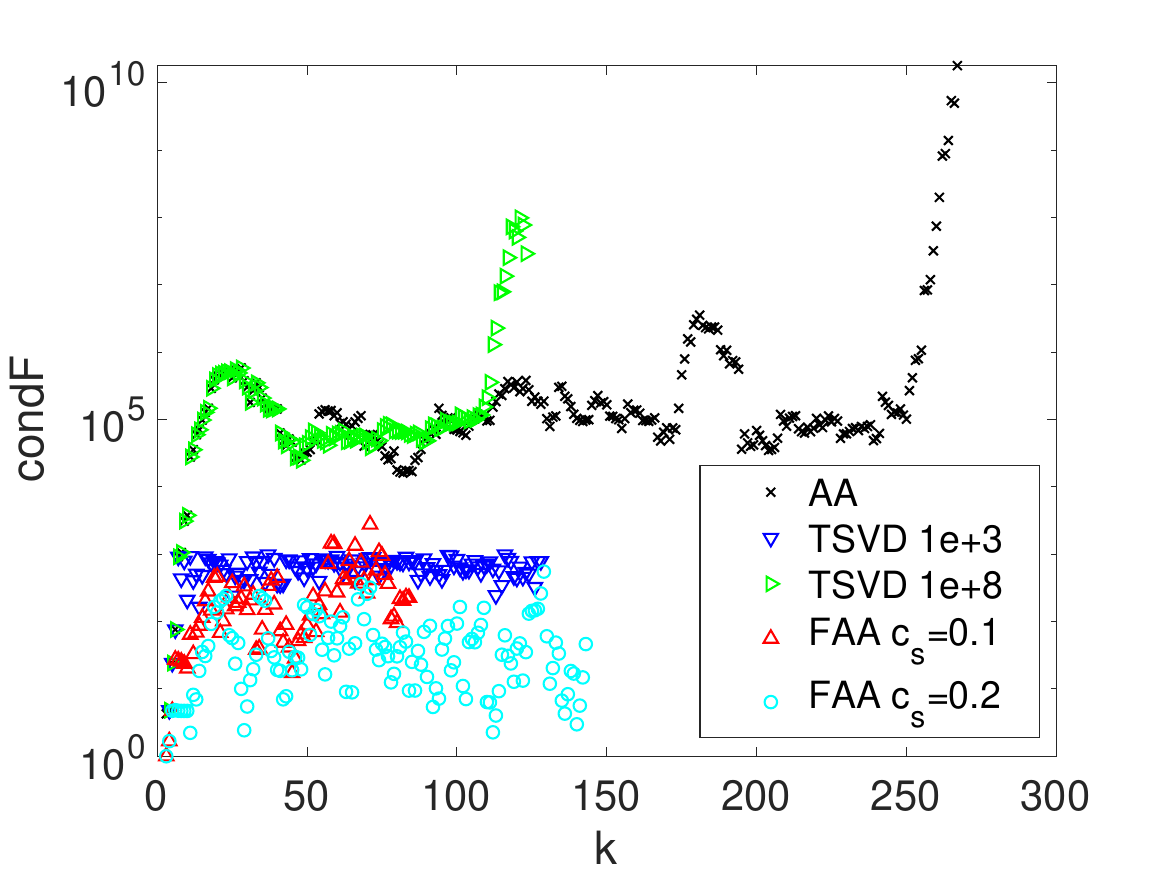}
\caption{\label{convplotsNLH} Shown above are convergence plots (left) and condition number plots (right), for AA with $m=20$ applied to the nonlinear Helmholtz problem with varying parameters for TSVD and FAA.}
\end{figure}

\subsection{2D Navier-Stokes equations}
\begin{figure}[ht]
\center
\includegraphics[width = .35\textwidth, height=.35\textwidth,viewport=115 45 465 390, clip]{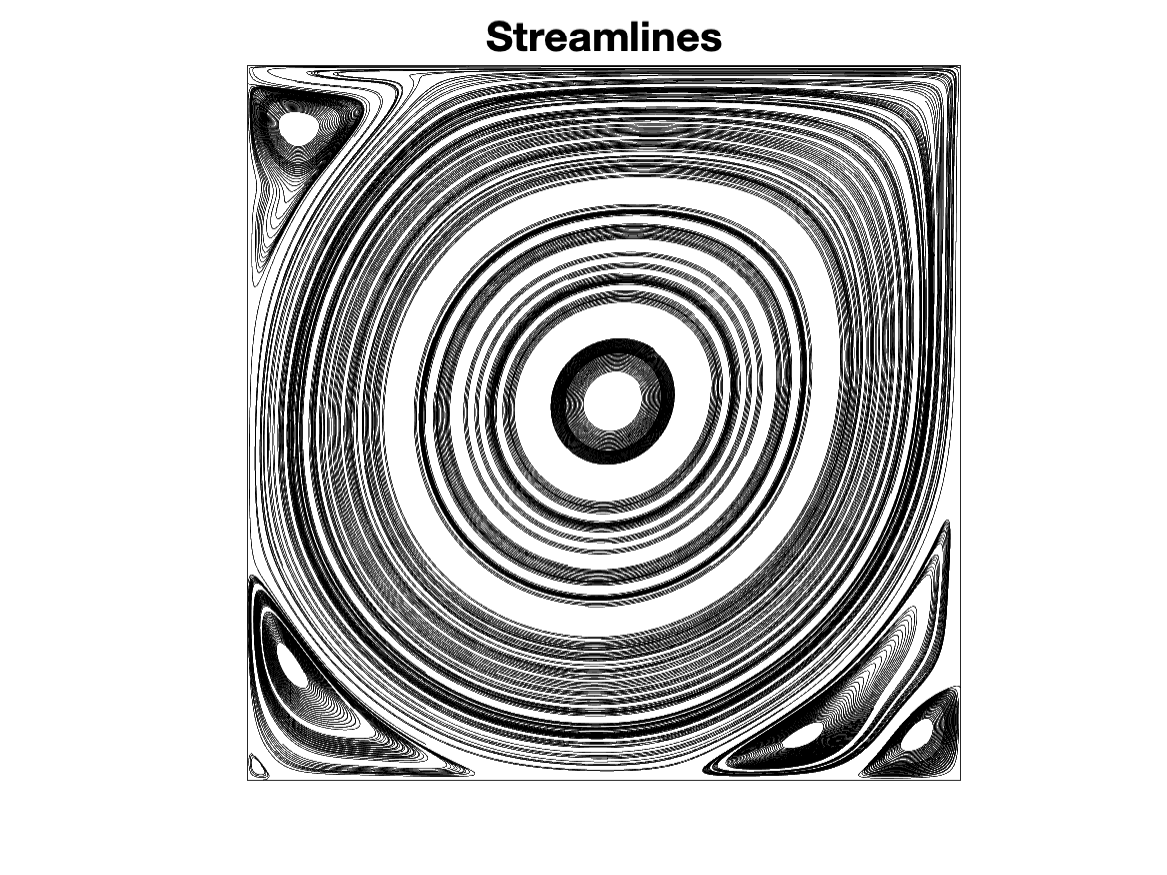} \ \ \ \ \ 
\includegraphics[width = .35\textwidth, height=.35\textwidth,viewport=100 10 500 390, clip]{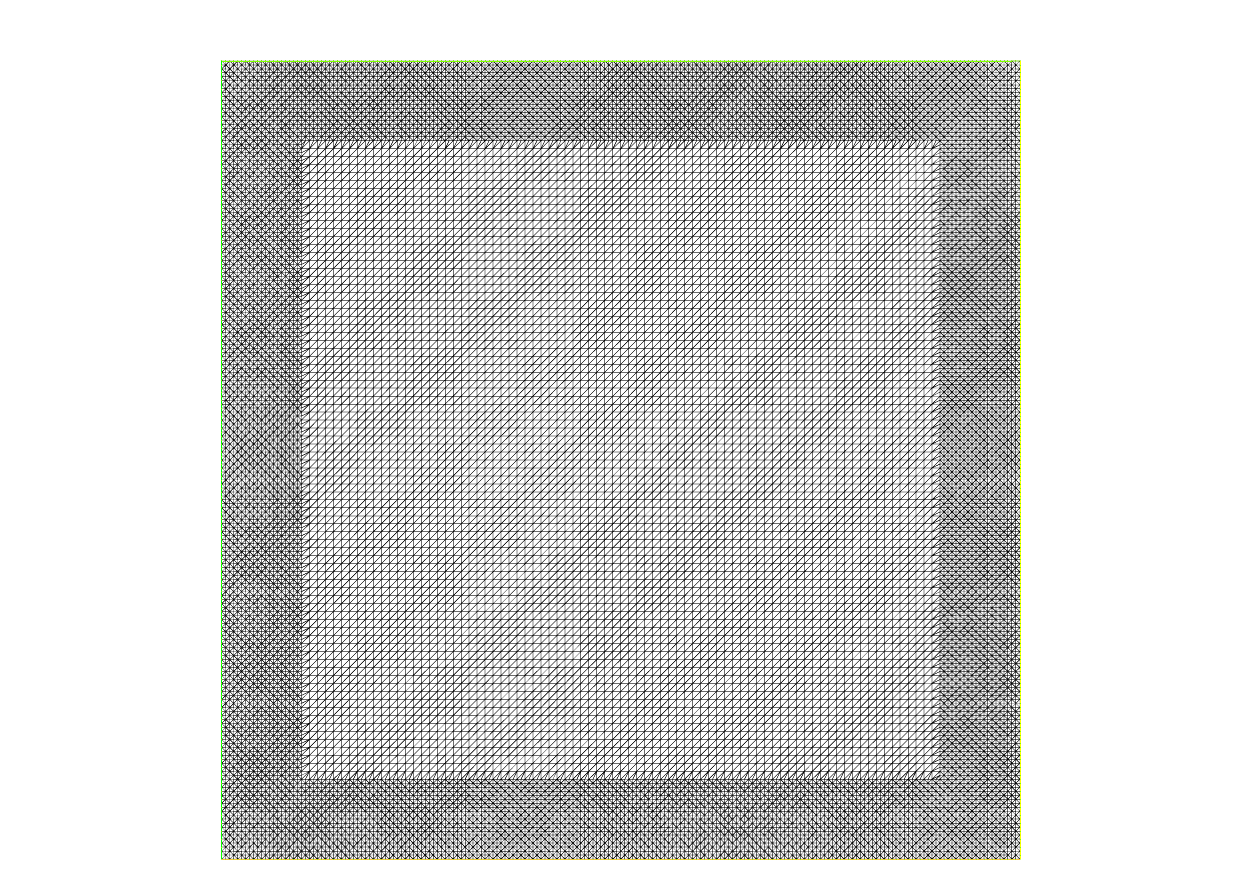}
\caption{\label{nseplot1} The plot above shows the solution of the steady Navier-Stokes 2D driven cavity problem at $Re=$10,000 (left) and the mesh used in our numerical tests (right).}
\end{figure}

Our next test is for the 2D driven cavity problem for the steady Navier-Stokes equations, which are given on a domain $\Omega=(0,1)^2$ by
\begin{eqnarray*}
u\cdot\nabla u + \nabla p - Re^{-1}\Delta u & = f, \\
\nabla \cdot u &=0,\\
u|_{\partial\Omega}&=u_{bc},
\end{eqnarray*}
where $u$ and $p$ are the unknown velocity and pressure, $f$ is an external forcing, $u_{bc}$ is a given (Dirichlet) boundary condition that is 0 on the sides and bottom and $\langle 1,0\rangle^T$ on the lid, and $Re$ is the Reynolds number.

\begin{figure}[ht]
\center
\includegraphics[width = .48\textwidth, height=.35\textwidth,viewport=0 0 550 405, clip]{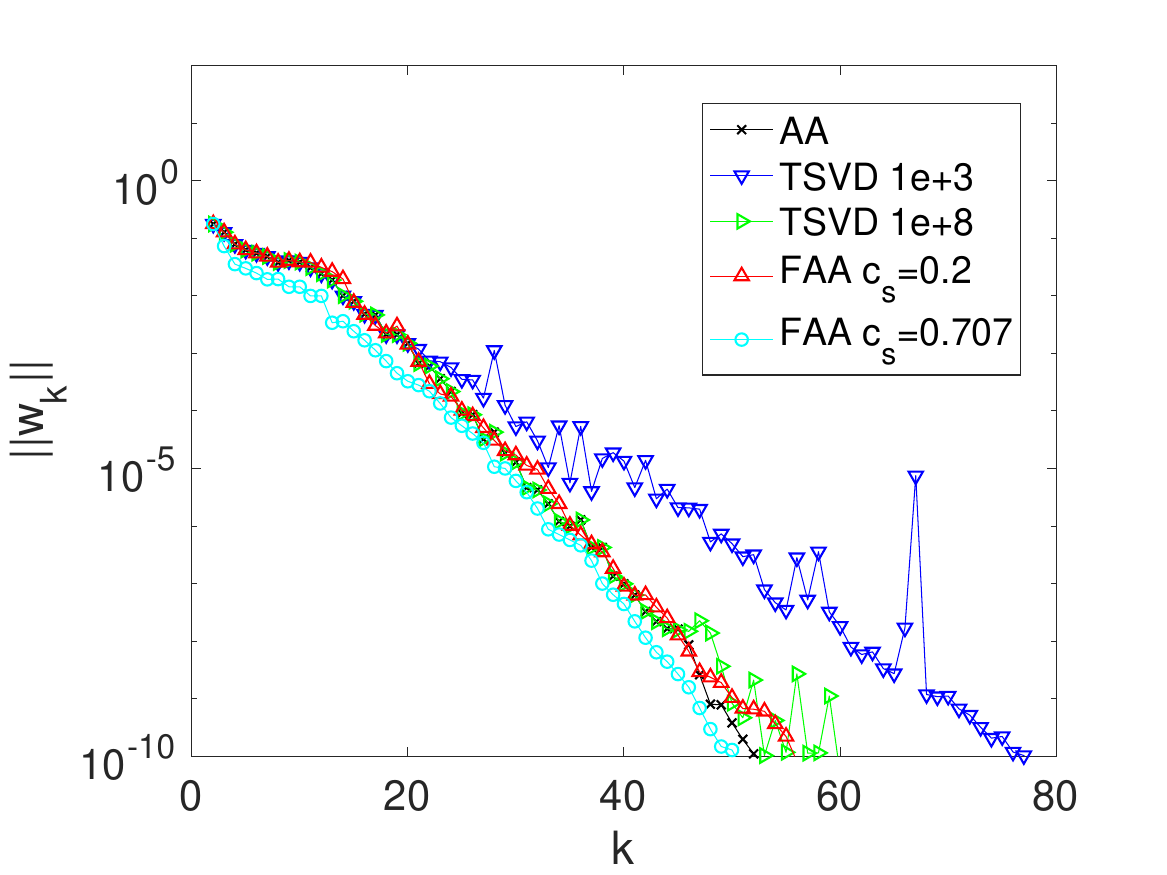}
\includegraphics[width = .48\textwidth, height=.35\textwidth,viewport=0 0 550 405, clip]{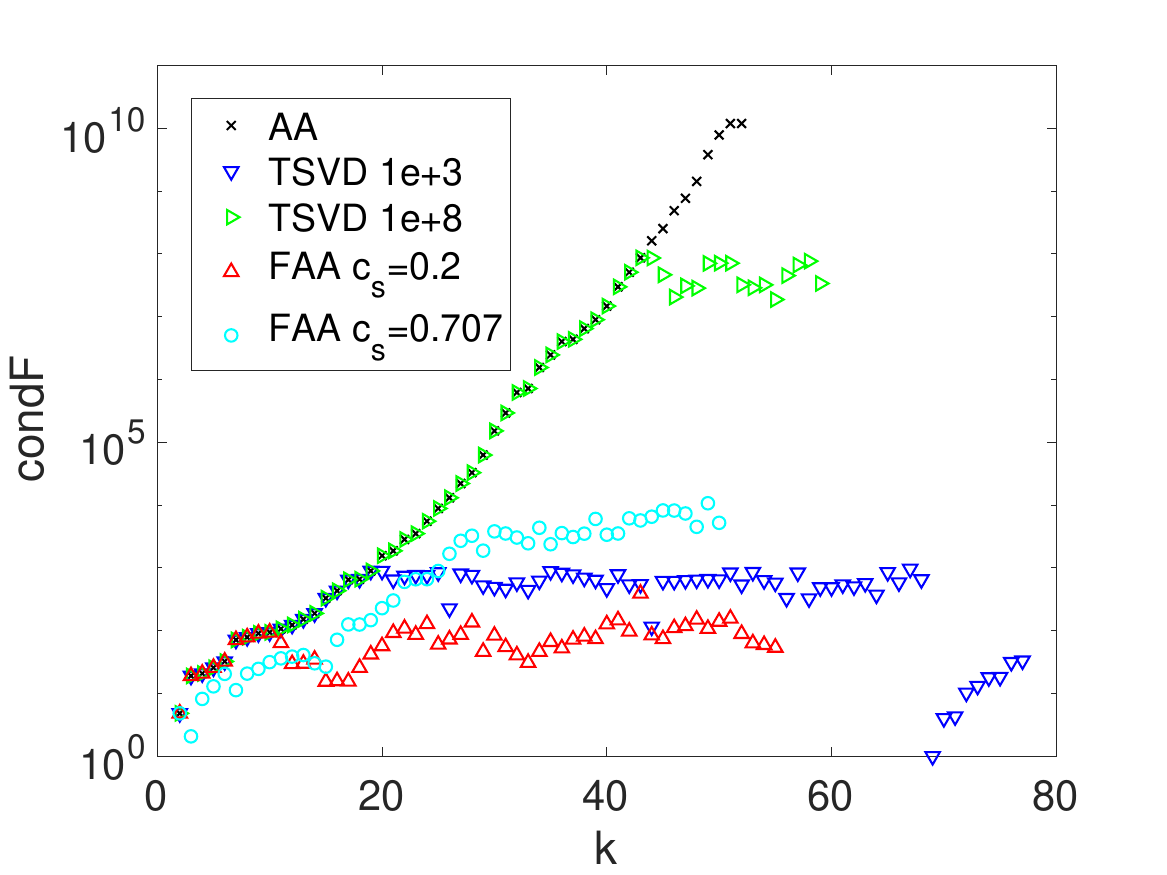}
\caption{\label{convplotsNSE} Shown above are convergence plots (left) and condition number plots (right), for 2D Navier-Stokes driven cavity test problem with varying parameters for TSVD and filtering.}
\end{figure}

The Picard iteration for the steady NSE is given by (suppressing the spatial discretization)
\begin{eqnarray*}
u_{j} \cdot\nabla u_{j+1} + \nabla p_{j+1} - Re^{-1}\Delta u_{j+1} & = f, \\
\nabla \cdot u_{j+1} &=0,\\
u_{j+1}|_{\partial\Omega}&=u_{bc},
\end{eqnarray*}
and we use an initial guess of zero $u_0=0$ (which implies that $u_1$ is the Stokes solution with this problem data).  The iteration is enhanced with $m=50$ AA and no relaxation ($\beta=1$).  As found in \cite{PR21,RVX21}, large $m$ (even $m=k-1$) for steady NSE problems tends to work well and often better than smaller $m$ when $Re$ is large.  We note that for this test, $m=0$ gave no convergence, $m=1$ gave slow convergence, and $m\ge 10$ gave convergence similar to $m=50$ (tests omitted).  A key advantage of the filtering strategy proposed herein is that one can choose large $m$ and if $m$ is ``too large'' on a particular iteration (large condition number of $F$) then $m$ 
is effectively
reduced in an optimal manner (with respect to the optimization problem).

The spatial discretization is grad-div stabilized  (with parameter 1) $(P_2,P_1)$ Taylor Hood elements on a triangulation created from a uniform $100 \times 100$ triangulation that is further refined once around the edges (within 0.1 from the boundary), which provides a total of 190,643 degrees of freedom; a plot of the mesh is shown in figure \ref{nseplot1} at right.

Results for convergence and condition number versus iteration are given in figure \ref{convplotsNSE} for usual AA, filtering with $\bar{\kappa}=10^8$ and $c_s=0.2$ and $1/\sqrt{2}$, and TSVD with tolerances $10^3$ and $10^8$.  
We observe that usual AA has a very large condition number near the end of the iteration, and that both TSVD and FAA can control this.  
We observe good convergence with all methods except TSVD with tolerance $10^3$ displays some erratic behavior and converges somewhat slower than the other methods.  Only filtering with $c_s=1/\sqrt{2}$ beat usual AA, although only slightly.

\subsection{Gross-Pitaevskii equations}\label{subsec:gs}

For our next test, we consider the Gross-Pitaevskii equations (GPE).  
In addition to demonstrating how well FAA works on another important 
application problem, this test will show how filtering naturally works in conjunction 
with the dynamic depth techniques of \cite{PR21}.  The GPE are given by
\begin{align*}
\mu \phi(x) = -\frac12 \Delta \phi(x) &+ V(x) \phi(x) + \eta |\phi(x)|^2 \phi(x), \ \ x\in \Omega,  \\
\phi(x) & = 0, \ \ x \in \partial\Omega,  \\
\int_{\Omega} |\phi(x)|^2\ dx & = 1, 
\end{align*}
where $V$ is a given trapping potential of the form $V(x)=\frac12 (\gamma_1^2 x_1^2 + ... + \gamma_d^2 x_d^2)$ with $\gamma_i>0 \ \forall i$, real parameter $\eta$,
and $\phi$ is the unknown and the eigenvalue $\mu$ can be calculated as
\begin{equation*}
\mu= \int_{\Omega} \left( \frac12 | \nabla \phi |^2 + V|\phi|^2 + \eta | \phi |^4 \right)\ dx. 
\end{equation*}
This system models stationary solutions of the nonlinear Schr\"odinger (NLS) equation, which is also commonly referred to
as the the non-rotational GPE in the context of Bose-Einstein condensates (BEC) \cite{LL77,P61,BD04}.   In the 
GPE setting, $\phi$ represents the macroscopic wave function of the condensate and the parameter $\eta$ being positive/negative represents attraction/repulsion
of the condensate atoms in GPE.   

We apply AA with and without TSVD, and FAA to a FEM discretization of the Picard-projection iteration from \cite{FRX21}, which we call PP$_{h}$ and is defined by:\\
\ \\
PP$_{h}$ Step 1: Given $\phi_k \in X_h$ with $\| \phi_k\|=1$, find $\hat \phi_{k+1}\in X_h$ satisfying 
\[
\frac12 (\nabla  \hat \phi_{k+1}, \nabla \chi) + (V \hat \phi_{k+1},\chi) + \eta (|\phi_k|^2 \hat \phi_{k+1},\chi)  = (\phi_k,\chi).
\]
PP$_{h}$  Step 2: Calculate
\[
\phi_{k+1}  = \frac{\hat \phi_{k+1} }{ \| \hat \phi_{k+1} \|}.
\]
\ \\ \ \\
We consider PP$_h$ as a fixed point iteration solver for a 2D test from \cite{BD04} that represents a harmonic oscillator potential together with a potential from a stirrer that corresponds to a far-blue detuned Gaussian laser beam, i.e.
\[
V(x,y)=\frac12 \left( x^2 +  y^2\right) + 4 e^{-\left( (x-1)^2 + y^2\right)}.
\]
Solutions are computed on $\Omega=(-8,8)^2$ using $\eta=$10,000 and initial guess 
\[
\phi_0(x,y)=\frac{1}{\pi^{1/2}} e^{-(x^2 + y^2)/2 },
\]
using a uniform 128x128 triangulation and globally continuous $P_2$ elements.  A plot of the solution is shown in figure \ref{AAPPplots} as a surface plot, which agrees well with the literature \cite{FRX21,BD04}.

\begin{figure}[h!]
\begin{center}
\includegraphics[width = .32\textwidth, height=.3\textwidth,viewport=0 0 550 400, clip]{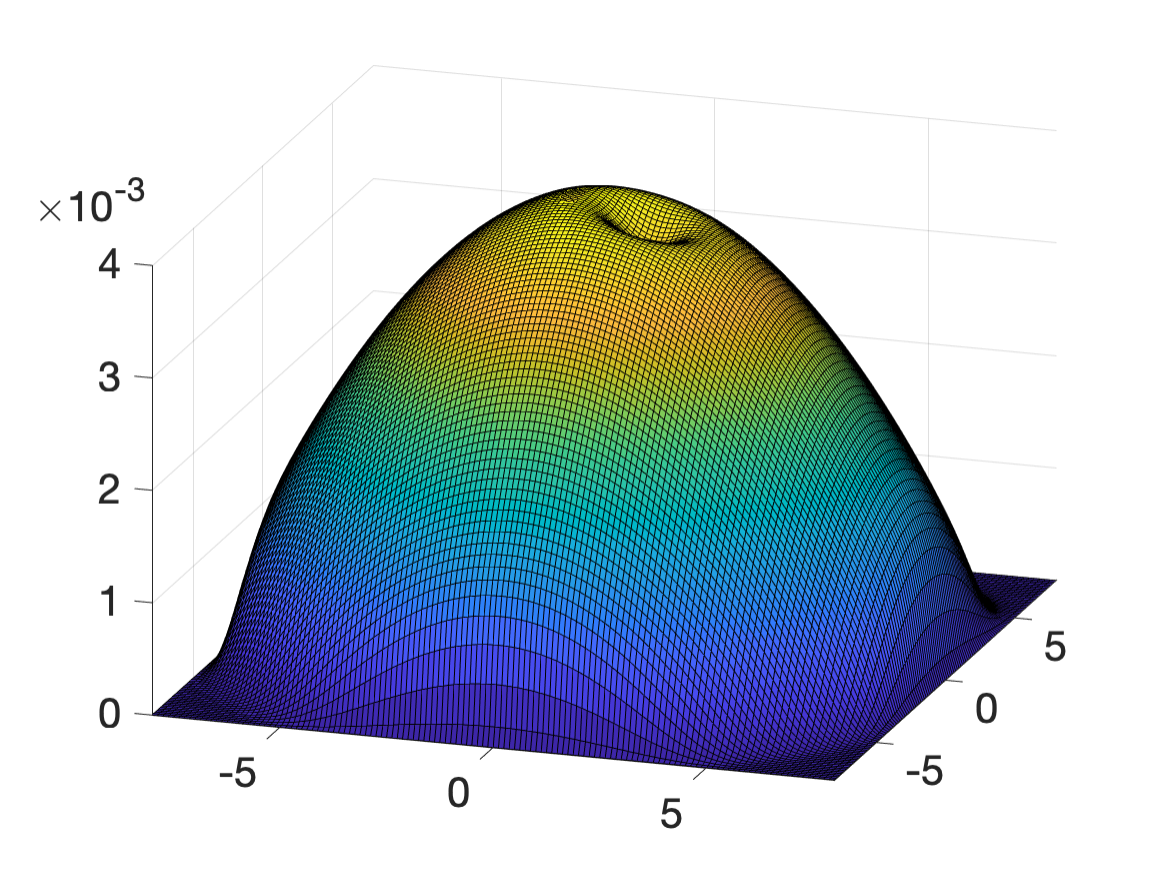}
\caption{\label{AAPPplots} Shown above is a surface plots of the converged ground state solution using $\eta=$10,000.}
\end{center}
\end{figure}

PP$_h$ is run with AA $m=1$ until the residual is below $10^{-2}$, after which $m=20$; 
no relaxation is used.  This multilevel depth strategy of small $m$ early and large $m$ late in the iteration proposed in \cite{PR21} (and motivated by their analytical convergence analysis) was found effective in \cite{FRX21} for this test problem, and in fact we find that if $m \ge 10$ is used for the entire iteration then we do not get convergence (a plot of failed convergence for constant m=20 AA is shown in figure \ref{gpe1}, and we note that AA with constant $m=20$ neither TSVD nor filtering with the same parameters as below provided for convergence).  

Tests are run with multilevel $m=1$ then $m=20$ AA with no filtering, FAA with $\bar\kappa=10^8$ and $c_s=$0.2:0.1:0.7, and TSVD with tolerances $10^3$ and $10^8$.  Plots of convergence and condition numbers for these tests are shown in figure \ref{gpe1}, and we observe that usual AA, TSVD with tolerances $10^8$ and $10^3$, and FAA with $c_s=$0.2 and 0.6 all had similar good convergence ($c_s$=0.3, 0.4 and 0.5 converged similarly as well; plots omitted).  FAA with $c_s=$0.7 converged significantly slower, and upon inspection we found that this filtering effectively reduced the AA to $m=1$ at each step.  AA with constant $m=20$ failed to converge, with or without filtering and TSVD (at least, with the same parameters as above).

\begin{figure}[ht]
\center
\includegraphics[width = .48\textwidth, height=.35\textwidth,viewport=0 0 850 605, clip]{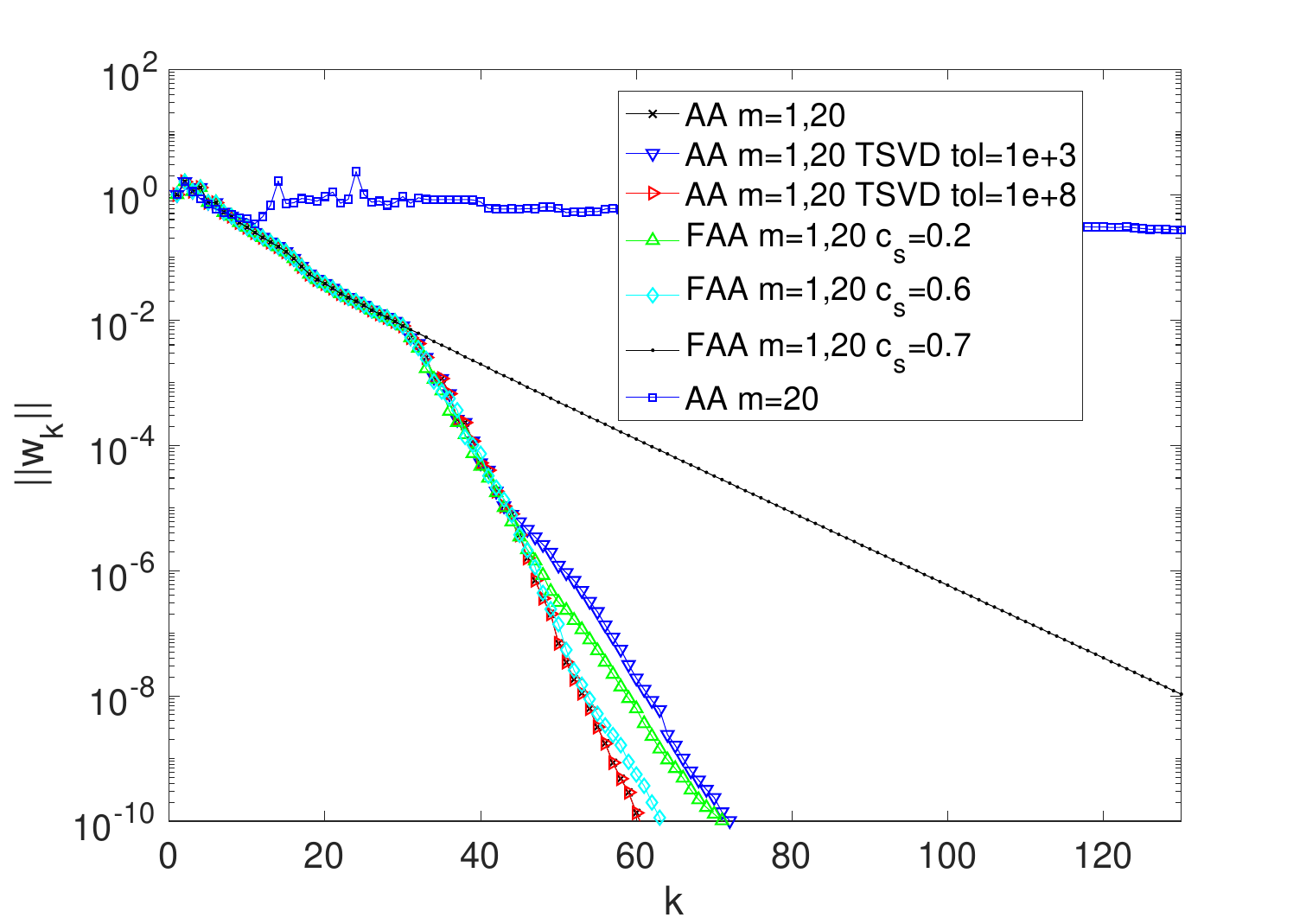}
\includegraphics[width = .48\textwidth, height=.35\textwidth,viewport=0 0 850 605, clip]{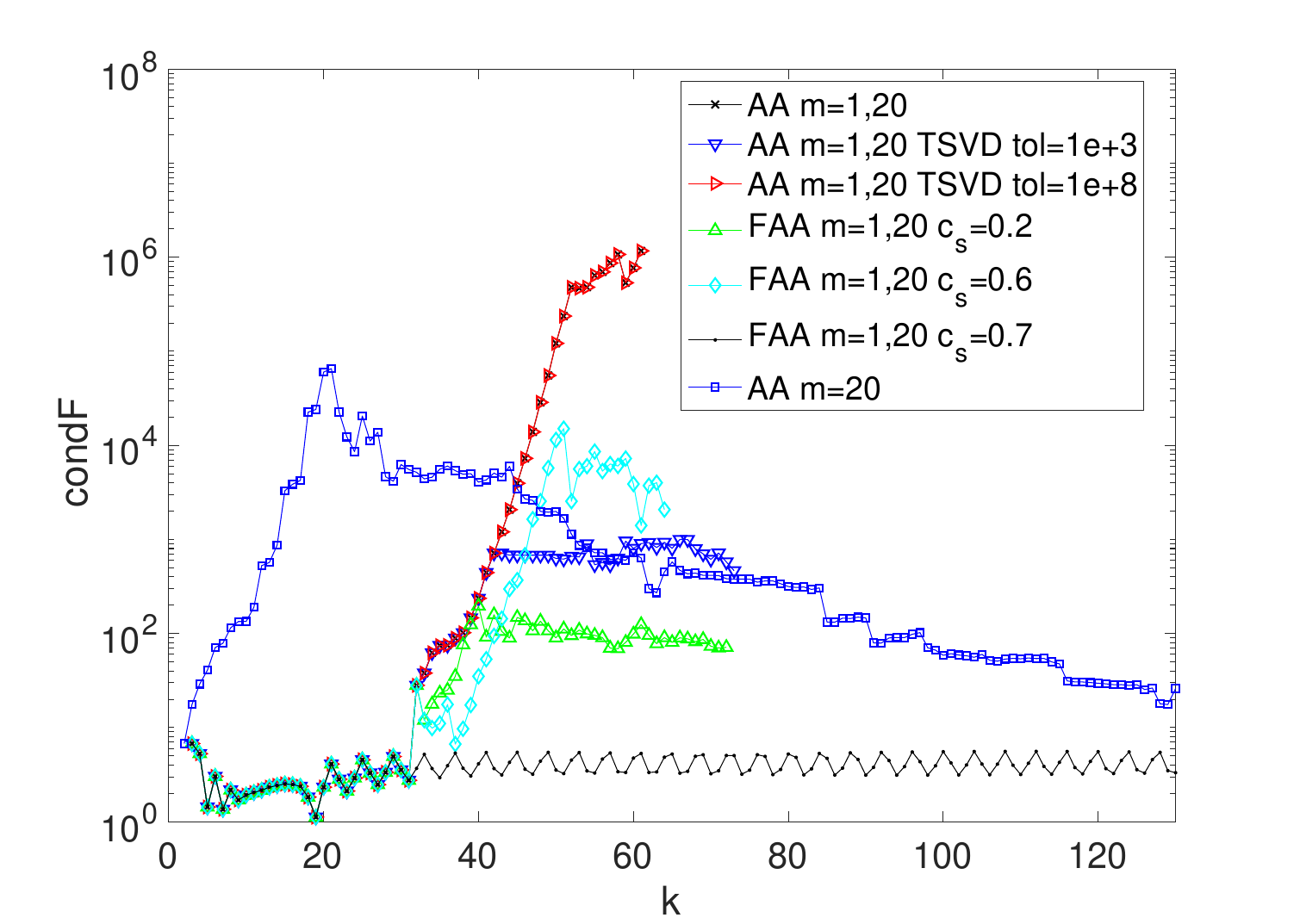}
\caption{\label{gpe1} Shown above are convergence plots (left) and condition number plots (right), for the Gross-Pitaevskii test problem.}
\end{figure}

\subsection{A monotone quasilinear equation}\label{subsec:quasi}
This example of an iteration for a finite element discretization of
monotone quasilinear problems, adapted from \cite[Example 1]{CoWi17}, 
illustrates the convergence behavior of filtering in the asymptotic regime.
We consider the PDE 
\begin{align}\label{eqn:monoql}
-\divi(\mu(|\grad u|)\grad u) = f, ~\text{ in }
\Omega = (0,1) \times (0,1), \quad u = 0 \text{ on } \pa \Omega,\\
\text{ with }
\quad \mu(|\grad u|) = 1 + \arctan(|\grad u|),
\quad f = \pi.
\end{align}
In our experiments the solution space $V_h$ consists of $C^0$ piecewise quadratic ($P_2$)
finite elements on
a uniform triangularization of the unit square with right triangles and 256
subdivisions along each of the $x$ and $y$ axes for a total of 263,169 degrees of 
freedom.
Each update step $w_{k+1}$ is found by the solution to the linear problem:
Find $w_{k+1} \in V_h$ such that
\begin{align}\label{eqn:qlupdate}
(\grad w_{k+1}, \grad v) = (f,v) - (\mu(|\grad u_k|)\grad u_k,\grad v), 
\text{ for all } v \in V_h.
\end{align}
By the analysis of \cite{CoWi17}, the fixed-point iteration 
$g(u_k) = u_k + \beta w_{k+1}$ is contractive for 
$0 < \beta \le \beta^\ast \coloneqq (1 + \sqrt 3/2 + \pi/3)^{-2}$.
Accelerating the fixed-point iteration written in terms of a constant damping factor 
$\beta$ is the same as accelerating the original fixed-point iteration 
$g(u_k) = u_k + w_{k+1}$ using $\beta_k = \beta$ for each iteration $k$ in either
original AA or FAA algorithm \ref{alg:faa}.
We consider iterations both with and without the damping factor $\beta$ sufficiently
small.
In table \ref{tabl:qmono-tsvd} we show the number of iterations to residual convergence 
$\nr{w_k} < 10^{-10}$ from the starting guess of $u_0 = 0$.   
We compare the filtering algorithm with maximum condition number $\bar \kappa = 10^8$
with the TSVD algorithm with maximum condition number up to $10^8$. 

\begin{table}
\begin{center}
{\footnotesize
\begin{tabular}{|c|r|r|r|r|r|r|r|r|}
\hline
& \multicolumn{4}{c|}{$\beta = \beta^\ast$} & \multicolumn{4}{c|}{$\beta = 1$} \\
\hline
FAA, $\bar \kappa = 10^8$ 
& $m=5$ & $m=10$ & $m=20$ & $m=40$ & $m=5$ & $m=10$ & $m=20$ & $m=40$ \\
$c_s = 0.1$  & 32 & 27 & 27 & 27 & 21 & 20 & 20 & 20 \\
$c_s = 0.4$  & 31 & 31 & 31 & 31 & 21 & 21 & 21 & 21 \\
$c_s= 2^{-1/2}$& 96 & 96 & 96 & 96 & 22 & 23 & 23 & 23 \\
\hline
TSVD & $m=5$ & $m=10$  & $m=20$ & $m=40$ & $m=5$ & $m=10$ & $m=20$ & $m=40$ \\
$\bar \kappa = 10^2$   & 42 & 57 & 77 & 100 & 45 & 64 & 119 & 238 \\
$\bar \kappa = 10^3$  & 35 & 45 & 68 & 96  & 32 & 38 & 80  & 164 \\
$\bar \kappa = 10^4$   & 30 & 40 & 64 & 91  & 30 & 33 & 63  & 121 \\
$\bar \kappa = 10^6$  & 33 & 36 & 58 & 96  & 30 & 26 & 45  & 85  \\
$\bar \kappa = 10^8$   & 33 & 38 & 64 & 92  & 30 & 22 & 35  & 51  \\
\hline
\end{tabular}
}
\end{center}
\caption{The number of iterations to residual convergence $\nr{w_k} < 10^{-10}$ for
the monotone quasilinear problem \eqref{eqn:monoql} with iteration 
\eqref{eqn:qlupdate}, using 
FAA with $c_s = 0.1, 0.4, 2^{-1/2}$, and
the TSVD with different maximum condition numbers $\bar \kappa$ and $m=5,10,20,40$. 
Without acceleration, the iteration with
$\beta = \beta^\ast$ converges in 175 iterations, and the iteration with $\beta = 1$
does not converge.
}
\label{tabl:qmono-tsvd}
\end{table}

For the contractive iteration with $\beta = \beta^\ast$ we see the 
TSVD approach for limiting the condition number overall slows convergence as the 
maximum allowed condition number $\bar \kappa$ is decreased. 
The filtering approach also limits the condition number but 
improves convergence, so long as the minimum allowable angle $c_s$ between columns of 
the least squares matrix $F_k$ in the angle filtering algorithm \ref{alg:lfilt} is 
not too large.  In this case $c_s = 2^{-1/2}$ is too large,
and restricts the algorithmic depth after filtering to $m=1$. 
Results for the FAA algorithm \ref{alg:faa} are only shown here for the 
maximum condition number $\bar \kappa$ shown for the TSVD since the length filtering
algorithm \ref{alg:lfilt} overestimates the condition, hence as shown in the 
previous examples actually restricts the condition number to several orders of magnitude
less than $\bar \kappa$. The agreement in the iteration counts for filtering as 
algorithmic depth $m$ is increased reflects the length-filtering algorithm cutting 
off older columns from $F_k$ so that increasing $m$ does not actually change the 
iteration after a certain point, whereas the TSVD continues to use all columns available
which can slow convergence. 

Filtering in the contractive setting is further illustrated with figure 
\ref{fig:qlbetamoncols}, which shows the 
columns of $F_k$ used at each iteration $k$ with the filtering approach with 
algorithmic depth $m=10$. The left two plots illustrate the columns of $F_k$ used
for the data shown in 
table \ref{tabl:qmono-tsvd} with $\bar \kappa = 10^8$ and  
$c_s = 0.1$ (leftmost) and $c_s = 0.4$ (to its right). The case
with $c_s = 2^{-1/2}$ is not shown because it simply restricts the iteration to 
$m=1$ throughout the entire iteration.  The right two plots in figure 
\ref{fig:qlbetamoncols} show the columns of $F_k$ used for 
$c_s = 0.1$ and $c_s = 0.4$ (rightmost) with $\bar \kappa
= 10^{16}$.  In this case we see for $c_s = 0.1$ the difference between the lower
and higher condition bound is that the length filtering in the second half of the 
iteration is surpressed for $\bar \kappa = 10^{16}$, although this does not have much 
of an effect on the total number of iterations.  For $c_s = 0.4$, the angle-filtering, 
which filters out earlier columns, dominates throughout both iterations.  

\begin{figure}
\centering
\includegraphics[trim = 310pt 230pt 322pt 240pt,clip = true,scale = 0.55]
{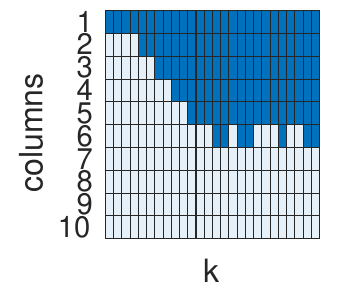}
\includegraphics[trim = 310pt 230pt 322pt 240pt,clip = true, scale = 0.55]
{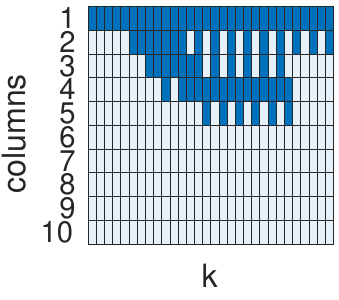}
\includegraphics[trim = 310pt 230pt 322pt 240pt,clip = true, scale= 0.55]
{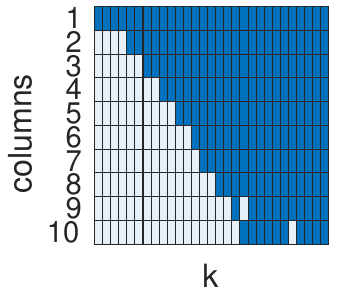}
\includegraphics[trim = 310pt 230pt 322pt 240pt,clip = true, scale = 0.55]
{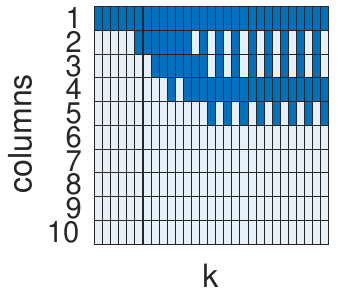}
\caption{The columns used after filtering for
the monotone quasilinear problem \eqref{eqn:monoql} with iteration 
\eqref{eqn:qlupdate} with $m=10$ and $\beta = \beta^\ast$. 
Column 1 is always used and column 2 may be filtered out in angle filtering.
From left to right:  
$c_s = 0.1, \bar \kappa = 10^8$; 
\ $c_s = 0.4, \bar \kappa = 10^8$ (as in table \ref{tabl:qmono-tsvd}); 
\ $c_s = 0.1, \bar \kappa = 10^{16}$; 
\ $c_s = 0.4, \bar \kappa = 10^{16}$. 
}
\label{fig:qlbetamoncols}
\end{figure}

For damping factor $\beta =1$, the iteration defined by update step \eqref{eqn:qlupdate}
is not contractive, and the iteration does not converge without acceleration.
It is interesting to compare the iteration counts for $c_s = 2^{-1/2}$ for 
$\beta = \beta^\ast$ and $\beta =1$. For $\beta =1$, the stricter requirement on the 
angles between columns of $F_k$ does not make a large impact on convergence. As seen
in the rightmost plot of figure \ref{fig:qlbeta1cols}, only the second column is 
regularly filtered out.  This illustrates that smaller damping factors can actually 
cause alignment of columns of $F_k$, which can interfere with both convergence and 
the condition of the least-squares problem. Comparing with the TSVD in table
\ref{tabl:qmono-tsvd}, the filtered algorithm converges after fewer total 
iterations, and is robust across parameter ranges tested, whereas the TSVD shows 
similar performance only when the right number of columns $m=10$ is used, and when the
allowable condition number is high enough. The second interesting observation that
we see in both figures \ref{fig:qlbetamoncols} and \ref{fig:qlbeta1cols} is the
angle filtering algorithm \ref{alg:afilt} tends to filter out the earlier (leftmost)
rather than the later columns, and most often the second.  We see this again in the next
example for a problem where the approximation progresses through a more substantial
preasymptotic regime, and where a more stringent condition on the angle between columns
of $F_k$ can be very useful.

\begin{figure}
\centering
\includegraphics[trim = 320pt 230pt 320pt 240pt,clip = true,scale = 0.70]
{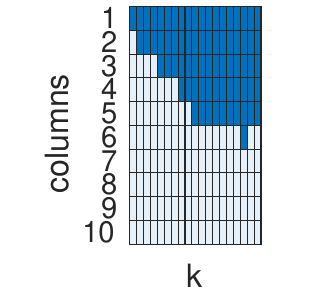}
\includegraphics[trim = 320pt 230pt 320pt 240pt,clip = true, scale = 0.70]
{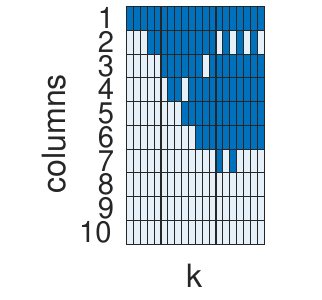}
\includegraphics[trim = 320pt 230pt 320pt 240pt,clip = true, scale= 0.70]
{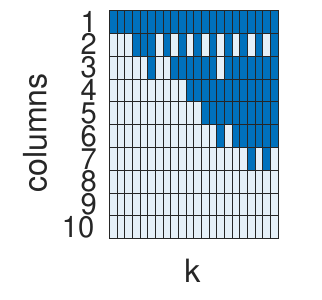}
\caption{The columns used after filtering for
the monotone quasilinear problem \eqref{eqn:monoql} with iteration
\eqref{eqn:qlupdate} with $m=10$, $\beta = 1$ and $\bar \kappa = 10^8$, 
as in table \ref{tabl:qmono-tsvd}. 
Column 1 is always used and column 2 may be filtered out in angle filtering.
Left: $c_s = 0.1$,  center: $c_s = 0.4$, right: $c_s = 2^{-1/2}$.}
\label{fig:qlbeta1cols}
\end{figure}

\subsection{$p$-Laplace}\label{subsec:plap}
In this example we consider finite element discretizations of the elliptic $p$-Laplace
equation
\[
-\divi \left( (|\nabla u|^2/2)^{(p-2)/2} \grad u\right) = f, ~\text{ in } \Omega.
\]
The $p$-Laplace (or $p$-Poisson) 
equation arises frequently as a model problem in non-Newtonian and 
turbulent flows and flows in porous media \cite{DiTh94,DFTW20}, as well as in the 
discretization of abstract linear operators in Banach space \cite{MuZe20}.
If $1 < p < 2$ the equation is singular, as the coefficient on the nonlinear diffusion
coefficient is negative and blows up as $|\grad u|$ goes to zero. Hence we consider
a regularized equation
\begin{align}\label{eqn:pLap-reg}
-\divi\left( \left(\eps^2 + \frac 1 2 |\nabla u|^2\right)^{(p-2)/2} \nabla u \right) = f,
~\text{ in } \Omega ~\text{ with } ~\eps = 10^{-14}. 
\end{align}

\begin{table}
\begin{center}
\begin{tabular}{|c|r|r|r|r|r|r|r|r|r|}
\hline
FAA, $\bar \kappa = 10^8$    & $P_1$ & $P_2$ & $P_3$ & $P_4$ \\
\hline
$c_s = 0.1$   & F   & 364 & 273 & 218 \\
$c_s = 2^{-1/2}$& 198 & 171 & 203 & 243 \\
$c_s$ dynamic & 142 & 134 & 152 & 215 \\
\hline
TSVD    & $P_1$& $P_2$& $P_3$ & $P_4$ \\
\hline
$\bar \kappa = 10^2$  & $>$500 & 207    & 154  & $>$500 \\
$\bar \kappa = 10^3$  & F      & F      & $>$500 & F    \\
$\bar \kappa = 10^4$  & F      & F      & $>$500 & F    \\
$\bar \kappa = 10^6$  & F      & F      & $>$500 & F    \\
$\bar \kappa = 10^8$  & F      & F      & $>$500 & F    \\
\hline
\end{tabular}
\end{center}
\caption{The number of iterations to residual convergence $\nr{w_k} < 10^{-10}$ 
for problem \eqref{eqn:pLap-reg} using update \eqref{eqn:pLap1} discretized with 
Lagrange $P_k$ elements with $k = 1,2,3,4$. Results are shown using
the TSVD with different maximum condition numbers $\bar \kappa$, and 
FAA 
with $c_s = 0.1, 2^{-1/2}$ and a dynamically chosen $c_s$ given by \eqref{eqn:csdyn}. 
For runs that did
not converge within 500 iterations, $>500$ demarks cases where the last 10 residuals
are less than 1 suggesting the iteration might eventually converge; otherwise 
iterations failing to converge are marked F. All runs used algorithmic depth $m=10$.
}
\label{tabl:pLap-tsvd}
\end{table}

The equation for $1 < p < 2$ is more challenging to solve for $p$ closer to one, 
and here we
use $p = 1.04$, which is quite small, {\em cf.,} \cite{DFTW20,PR21}. Similarly to
\cite{BKST15,PR21}, we consider \eqref{eqn:pLap-reg} over the domain 
$\Omega = (0,2)\times(0,2)$ with homogeneous Dirichlet boundary conditions, 
constant forcing $f = \pi$, and a relatively bad initial guess 
$u_0 = xy(x-1)(y-1)(x-2)(y-2)$ for the Picard iteration given by the update step:
Find $w_{k+1}\in V_h$ satisfying for all $v\in V_h$
\begin{align} \label{eqn:pLap1}
\int_\Omega a_{\eps,p}(u_k)\grad w_{k+1}\cdot \grad v \dd x = 
\int_\Omega fv \dd x - \int_\Omega a_{\eps,p}(u_k) \grad u_k \cdot \grad v \dd x,
\end{align}
with $a_{\eps,p}(u_k) = \left(\eps^2 + \f 1 2 |\grad u_{k}|^2\right)^{(p-2)/2}$.
We consider discretizations with $C^0$ $P_k$ Lagrange elements for $k = 1,2,3,4$, 
over a uniform triangularization of $\Omega$ with right triangles with 256 
subdivisions in each of the $x$ and $y$ axes, for a total of 66,049 ($P_1$);
263,169 ($P_2$); 591,361 ($P_3$); and 1,050,625 ($P_4$) respective degrees of freedom.
We compare the FAA algorithm \ref{alg:faa} with both fixed and 
dynamically chosen parameters to the TSVD approach. The angle filtering parameters used 
in this example are $c_s = 0.1$, $c_s = 2^{-1/2}$ and the dynamic choice
\begin{align}\label{eqn:csdyn}
c_s = \max\left\{\min\left\{ \|w_{k+1}\|^{1/2},2^{-1/2}  \right\} ,0.1\right\},
\end{align}
which transitions between the smaller and larger choices of $c_s$ based on 
the norm of the 
fixed-point
update step.

Table \ref{tabl:pLap-tsvd} shows the number of iterations to residual convergence
$\nr{w_k} < 10^{-10}$ for the filtered algorithm \ref{alg:faa}. We see the filtering 
algorithm is an enabling
technology that allows us to solve this problem at all, whereas the TSVD approach is
only succssful when the condition number is stricly controlled with $\bar\kappa = 10^2$,
and even then we only see a successful solve within 500 iterations for the $P_2$ and 
$P_3$ 
elements. The filtering method is successful in all but one case: $c_s = 0.1$ with 
$P_1$ elements.

\begin{figure}
\includegraphics[trim = 0pt 0pt 40pt 20pt,clip = true,width=.32\linewidth, height=.3\linewidth]
{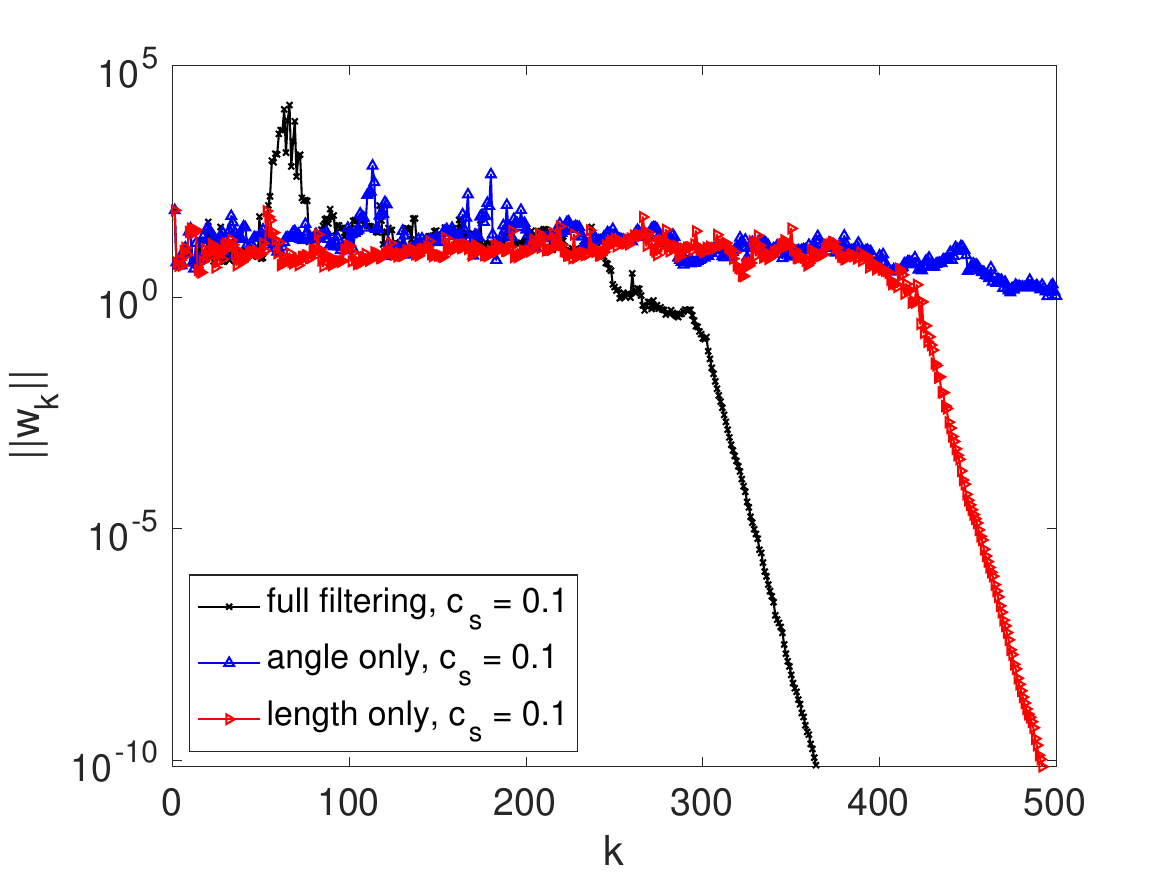}
\includegraphics[trim = 0pt 0pt 40pt 20pt,clip = true, width=.32\linewidth, height=.3\linewidth]
{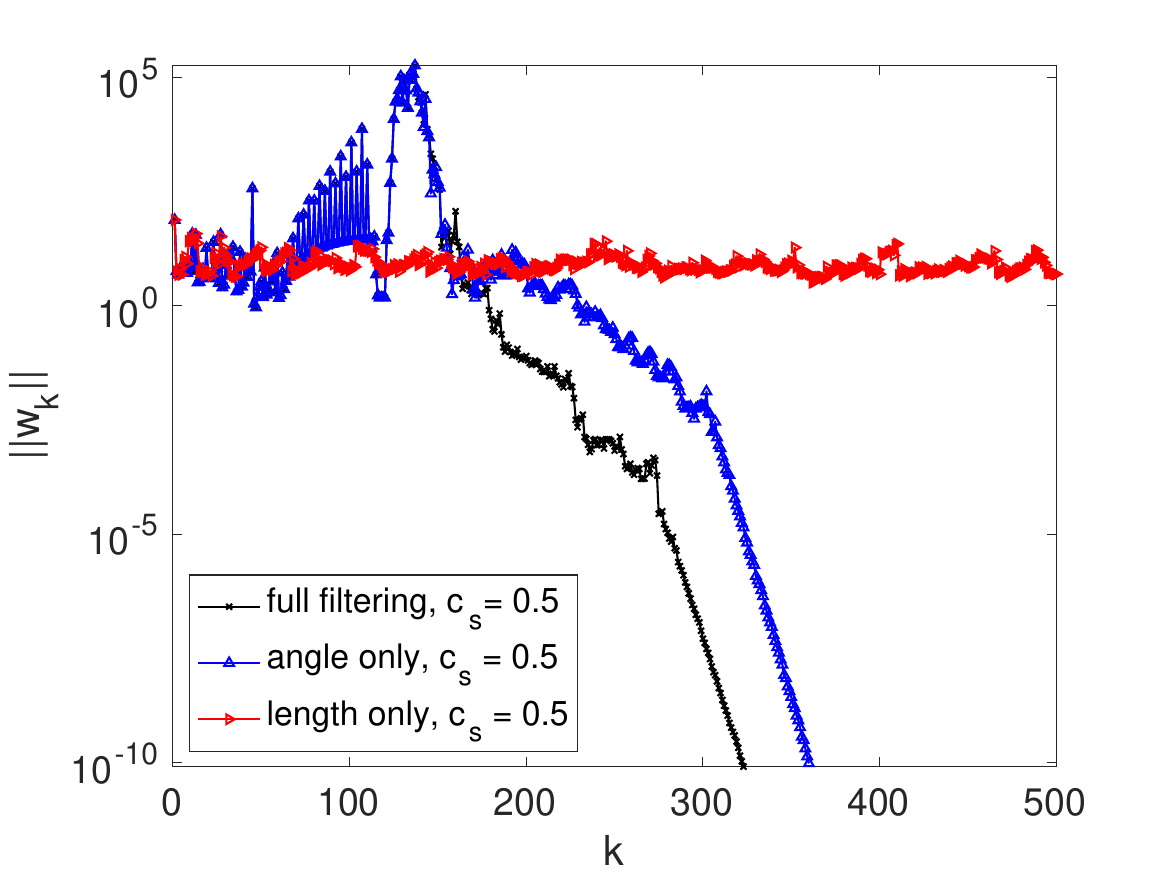}
\includegraphics[trim = 0pt 0pt 40pt 20pt,clip = true, width=.32\linewidth, height=.3\linewidth]
{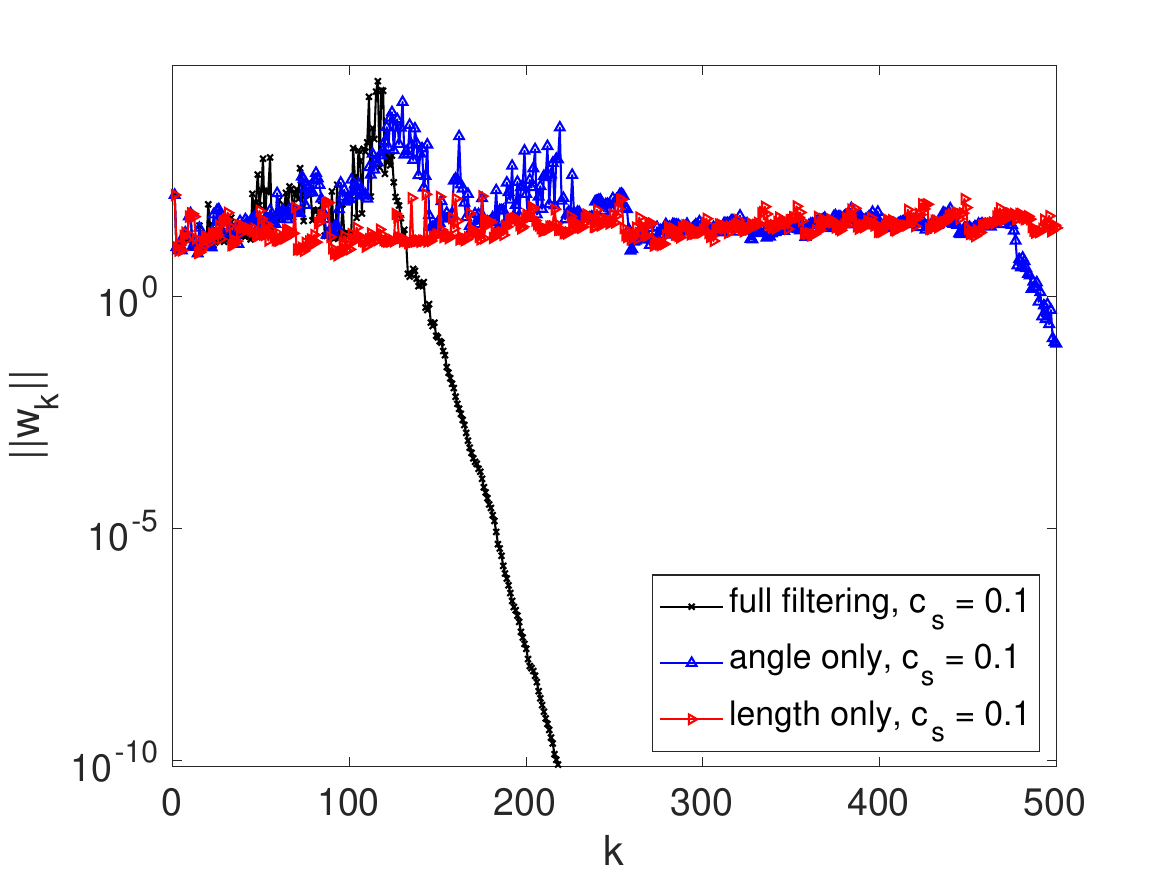}
\caption{Residual convergence for \eqref{eqn:pLap-reg} using update 
\eqref{eqn:pLap1}, comparing full filtering,
angle filtering only and length filtering only.
Left: Lagrange $P_2$ elements with parameter $c_s = 0.1$; center: Lagrange $P_2$ elements with parameter $c_s = 0.5$; right: Lagrange $P_4$ elements with parameter $c_s = 0.1$. }
\label{fig:plap-avsl}
\end{figure}

The next observation we can make is the angle filtering algorithm \ref{alg:afilt}
is important 
although not always sufficient for successful passage through the preasympotic 
regime. Figure \ref{fig:plap-avsl} shows residual convergence using the full 
(length plus angle) filtering approach, just angle filtering and just length filtering.
The plot on the left, with $c_s = 0.1$ for $P_2$ elements
shows the length filtering (with no condition
enforced on the angles between columns of $F_k$) still can work
well in the asympototic regime, and in this case is an adequate if not 
efficient way through the preasymptotic regime.  
The center plot shows $P_2$ elements with 
$c_s = 0.5$, where with a stronger angle condition, the angle filtering
alone is sufficient to achieve asymptotic convergence, whereas the sharper estimate
on the condition number from a larger value of $c_s$ does not force the length filtering
alone to remove enough columns to converge.  The plot on the right for $P_4$ elements
with $c_s =0.1$ reinforces these observations, demonstrating that the combination of
length and angle filtering rather than one strategy alone is more successful, 
in agreement with the theory.

We further see the importance of angle filtering in the preasymptotic regime 
by comparing the first plots in figures \ref{fig:plap-p2-cols}
and \ref{fig:plap-p4-cols} which show the columns of $F_k$ used at each iteration $k$.
For $c_s = 0.1$ for $P_2$ and respectively $P_4$ elements, we see less angle filtering 
in the $P_1$
elements throughout the early stages of the iteration, whereas the $P_4$ elements see more
alignment in the columns of $F_k$ and progress more quickly to the asymptotic regime with
a small angle filtering parameter.  
This is at least in part due to the residual bound \eqref{eqn:tgenm} 
as the angle filtering not only controls the condition number, but also controls the 
scaling and build up of higher-order terms in the residual, which matter more when the 
residual is larger and less as it is smaller.
On the other hand, with a larger angle filtering
parameter $c_s = 2^{-1/2}$, the angle filtering remains heavy throughout the iteration
which ultimately slows convergence in the asymptotic regime.
\begin{figure}
\includegraphics[trim = 56pt 230pt 55pt 240pt,clip = true,scale = 0.50]
{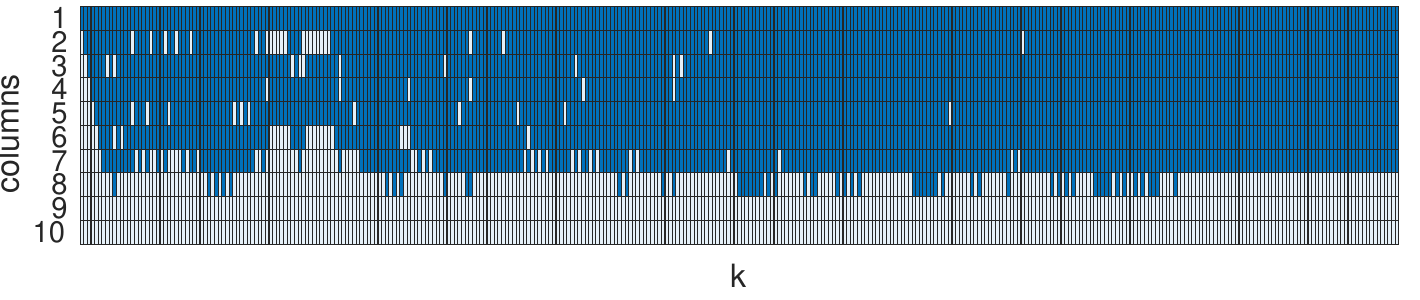}\\
\includegraphics[trim = 217pt 230pt 230pt 240pt,clip = true, scale = 0.50]
{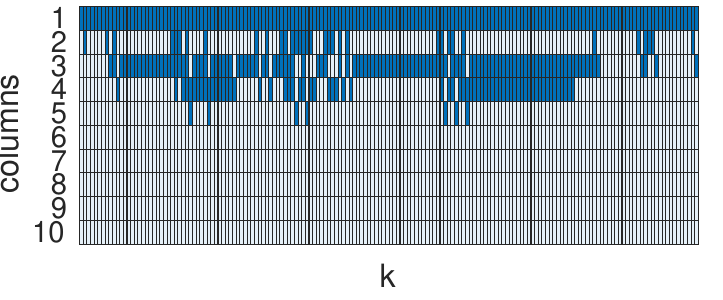}
\includegraphics[trim = 220pt 230pt 220pt 240pt,clip = true, scale= 0.50]
{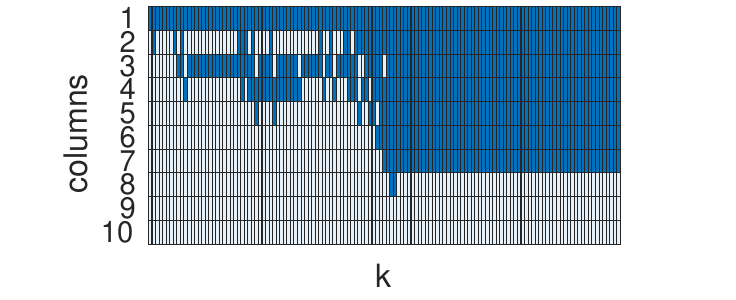}
\caption{The columns used after filtering for problem \eqref{eqn:pLap-reg} using update 
\eqref{eqn:pLap1}, as in table \ref{tabl:pLap-tsvd} with Lagrange $P_2$ elements. 
Column 1 is always used and column 2 may be filtered out in angle filtering.
Top: $c_s = 0.1$,  bottom left: $c_s = 2^{-1/2}$, bottom right: $c_s$ dynamically set by
\eqref{eqn:csdyn}. Each simulation uses algorithmic depth $m=10$.}
\label{fig:plap-p2-cols}
\end{figure}

For this example, where successful solves transition from a poor initial guess through
a preasymptotic regime into an asysmptotic regime, the dynamic choice of angle 
parameter $c_s$ performs the best, as seen in table \ref{tabl:pLap-tsvd}.
This strategy controls the accumulation and scaling
of higher order terms in the residual early in the iteration, and allows greater 
algorithmic depth when the residual terms are small enough not to matter, by which the 
improvement in the first-order term in the residual bound \eqref{eqn:tgenm} 
from addition columns of $F_k$ yields a faster solve.

\begin{figure}
\includegraphics[trim = 170pt 230pt 170pt 240pt,clip = true,scale = 0.60]
{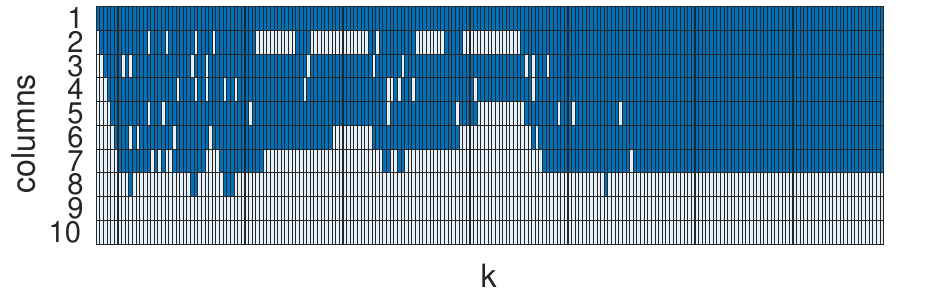}\\
\includegraphics[trim = 150pt 230pt 150pt 240pt,clip = true, scale = 0.60]
{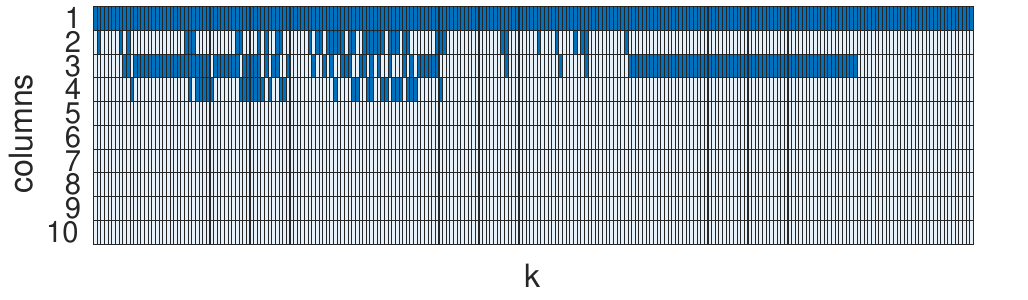}\\
\includegraphics[trim = 208pt 230pt 210pt 240pt,clip = true, scale= 0.60]
{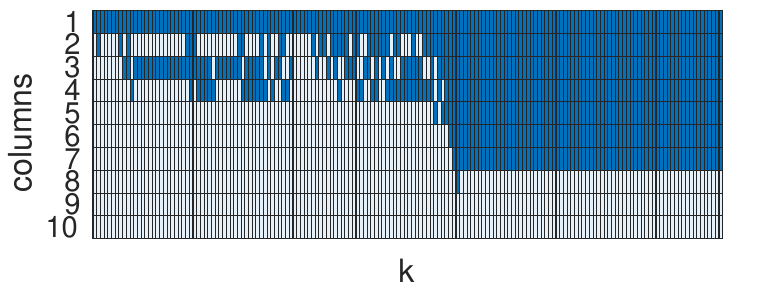}
\caption{The columns used after filtering for problem \eqref{eqn:pLap-reg} using update 
\eqref{eqn:pLap1}, as in table \ref{tabl:pLap-tsvd} with Lagrange $P_4$ elements. 
Column 1 is always used and column 2 may be filtered out in angle filtering.
Top: $c_s = 1$,  middle: $c_s = 2^{-1/2}$, bottom: $c_s$ dynamically set by 
\eqref{eqn:csdyn}. Each simulation uses algorithmic depth $m=10$.}
\label{fig:plap-p4-cols}
\end{figure}

\section{Conclusion}\label{sec:conc}
In this paper we developed a column-filtering strategy to control the condition of 
the least-squares problem in Anderson acceleration.  
We demonstrated theoretically that the method controls the condition number of the 
matrix used in the least-squares problem at each iteration of the algorithm.
The filtering algorithm consists of two phases, one which filters for disparity in 
column length, and the second which imposes a lower bound on the angles between subspaces
spanned by the columns of the least-squares matrix.  The angle filtering approach
has already been demonstrated by the authors to aid convergence in \cite{PR21}, as
it enforces a sufficient condition for controlling the scaling of higher-order terms 
in the residual expansion for nonlinear problems.  The angle filtering is seen to be 
active more in the preasymptotic regime, whereas the presently introduced length 
filtering approach is more active in the asymptotic regime where older columns in the 
least-squares matrix can be many orders of magnitude greater in length than more 
recent columns due to the convergence of the algorithm.  The method is demonstrated 
on a range of problems based on discretized partial differential equations.  

Overall, we see an improvement in convergence properties using the introduced FAA 
algorithm.  A discussion of effective 
parameter ranges and a dynamic strategy for choosing algorithm parameters is included.  Comparing to AA using TSVD, in our numerical tests 
FAA performed always at least as well, but sometimes substantially
better.
In our tests where both FAA and AA with TSVD converged in a similar number of iterations,
FAA maintained a lower condition number for the least-squares problem.
Future work includes integrating the filtering strategies introduced herein with 
dynamic selection of the relaxation parameter to further enhance stability in the
preasymptotic and efficiency in the asymptotic phases of the solution process.

\section{Acknowledgements}
The authors would like to thank the anonymous referees for providing suggestions
that improved the clarity and presentation of this article.
Author SP acknowledges partial support from NSF grant DMS 2011519, and author LR acknowledges partial support from NSF grant DMS 2011490.

\bibliographystyle{siamplain}
\bibliography{graddiv}

\end{document}